\newtheorem{theorem}{Theorem}[section]
\newtheorem{lemma}[theorem]{Lemma}
\newtheorem{corollary}[theorem]{Corollary}
\theoremstyle{definition}
\newtheorem{remark}[theorem]{Remark}
\numberwithin{equation}{section}
\def\be{\begin{equation}}
\def\ee{\end{equation}}
\newcounter{alphabet}
\begin{document}
\title{Initial Successive coefficients for certain classes of univalent functions}
\author[V. Arora]{Vibhuti Arora}
\address{Department of Mathematics \\ National Institute of Technology Calicut\\
 India}
\email{vibhutiarora1991@gmail.com}

\subjclass[2010]{30D30, 30C45, 30C50 30C55.}
\keywords{Convex functions, Starlike functions, Spirallike functions, Successive coefficients, Univalent functions}

\begin{abstract}
We consider a
family of all analytic and univalent functions in the unit disk of the form $f(z)=z+a_2z^2+a_3z^3+\cdots$. The aim of this article is to investigate the bounds of the difference of moduli of initial successive coefficients, i.e. $\big | |a_{n+1}|-|a_n|\big |$ for $n=1,\,2$ and for some subclasses of analytic univalent functions. We found that all the estimations are sharp in nature by constructing some extremal functions.
\end{abstract}

\maketitle

\section{Introduction and main results}\label{Introduction}

Let $\mathcal{A}$ denote the class of functions $f$ of the form
\begin{equation}\label{S}
f(z)=a_1z+a_2z^2+a_3z^3+\cdots,
\end{equation}
with $a_1=1$, which are analytic in the unit disk $\mathbb{D}:=\{z\in \mathbb{C}:|z|<1\}$. Let $\mathcal{S}$ be the set all functions $f\in \mathcal{A}$ that are univalent in $\mathbb{D}$. For a general theory of univalent functions, we refer the classical books \cite{Dur83, Goo83, Pom75}.

The estimation of the difference of moduli of successive coefficients $\big||a_{n+1}|-|a_n|\big|$ is an
important problem in the study of univalent functions. It is well-known that the difference $|a_{n+1}|-|a_n|$ is bounded for $f\in \mathcal{S}$. Indeed, Hayman\cite{Hay63} proved $\big||a_{n+1}|-|a_n|\big |\le A$ for $f \in \mathcal{S}$, where $A \ge 1$
is an absolute constant. Pommerenke\cite{POM71} conjectured that $\big||a_{n+1}|-|a_n|\big| \le 1$ for the class of starlike function
which was proved by Leung\cite{Leu78}.  On the other hand, sharp bound is known only for $n=2$ (see \cite[Theorem~3.11]{Dur83}), namely
$$
-1\leq|a_3|-|a_2|\leq 1.029\ldots.
$$For convex functions, Li and Sugawa \cite{LS17}  investigated the sharp upper bound of $|a_{n+1}|-|a_n|$ for $n\ge 2$, and sharp lower bounds for $n=2,3$.
Several results are known in this direction. These observations are also addressed in the recent paper \cite{APS19}, where a bound for $\big||a_{n+1}|-|a_n|\big|$, $n\ge 2$, for the class of ${\gamma}$-spirallike functions of order $\alpha$ is obtained. However, the bound contains an unknown constant.

The successive coefficient problem was first studied for the class of univalent functions to prove the classical Bieberbach conjecture.
 Although, the Bieberbach conjecture has been proved after sixty nine years by De Branges in $1985$, the successive coefficient problem is still an open problem for several important class of functions including the whole classes of analytic univalent functions. Thus, the successive coefficient problem is still under consideration for the class of univalent functions and its subclasses even for some particular values of $n$.
In the present paper, the sharp bounds for $|a_2|-|a_1|$ and $|a_3|-|a_2|$ are studied when functions are 
$\gamma$-spirallike of order $\alpha$, $\gamma$-convex of order $\alpha$, and belonging to a well-known subclass of starlike functions. Note that, in $2021$, Sim and Thomas \cite{ST21} estimated the sharp bound for $|a_3|-|a_2|$ when $f$ is $\gamma$-spirallike function of order $\alpha$. However this paper provides an alternate proof for the above problem. Related work in the direction of the present investigation can also be found in \cite{Gri76,L18,PO19,ST20,SimT20,ST21}. 

\section{Preliminary results}
Let $\mathcal{P}$ denote the class of all analytic functions $p$ having positive real part in $\mathbb{D}$, with the form
$$
p(z)=1+c_1z+c_2z^2+\cdots.
$$
A member of $\mathcal{P}$ is called a {\em Carath\'{e}odory function}. It is known that $|c_n|\le 2$ for a function $p\in \mathcal{P}$ and for all $n\ge 1$ (see \cite{Dur83}).

Parametric representations of the coefficients are often more useful. Libera and Z\l{}otkiewicz \cite{Lib82,Lib83} derived the following parameterizations of possible
values of $c_2$ and $c_3$. 
\begin{lemma}\label{Pclass}
	Let $-2\le c_1 \le 2$ and $c_2,\,c_3\in \mathbb{C}$. Then there exists a function $p\in\mathcal{P}$ with
	$$
	p(z)=1+c_1z+c_2z^2+c_3z^3+\cdots
	$$
	if and only if 
	$$
	2c_2=c_1^2+(4-c_1^2)x
	$$
	and 
	$$
	4c_3=c_1^3+2(4-c_1^2)c_1 x^2-(4-c_1^2)c_1x^2+2(4-c_1^2)(1-|x|^2)y
	$$
	for some $x,y \in \mathbb{C}$ with $|x|\le 1$ and $|y|\le 1$.
\end{lemma}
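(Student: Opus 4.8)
The plan is to reduce the lemma to two successive applications of the Schwarz lemma through the standard bijection between Carath\'eodory functions and Schwarz functions. Given $p\in\mathcal P$, set $\omega(z)=\dfrac{p(z)-1}{p(z)+1}$; then $\omega$ is analytic on $\mathbb D$ with $\omega(0)=0$ and $|\omega(z)|<1$. Writing $\omega(z)=b_1z+b_2z^2+b_3z^3+\cdots$ and comparing coefficients in the identity $p(z)-1=(p(z)+1)\omega(z)$ gives $b_1=c_1/2$ together with explicit linear relations recovering $c_2$ from $b_1,b_2$ and $c_3$ from $b_1,b_2,b_3$. Because $-2\le c_1\le 2$, the number $b_1=c_1/2$ is real with $|b_1|\le 1$ and $1-|b_1|^2=(4-c_1^2)/4$.

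First I would study $g(z)=\omega(z)/z=b_1+b_2z+b_3z^2+\cdots$, an analytic map of $\mathbb D$ into $\overline{\mathbb D}$. One Schur step: the function $\dfrac{g(z)-b_1}{1-\bar b_1\,g(z)}$ vanishes at the origin, hence equals $z\,h(z)$ for some analytic $h\colon\mathbb D\to\overline{\mathbb D}$; comparing the first two Taylor coefficients with $x:=h(0)$ produces $b_2=(1-|b_1|^2)x$ and $b_3=(1-|b_1|^2)\bigl(h'(0)-\bar b_1 x^2\bigr)$. A second application of the Schwarz lemma, now to $h$, gives $|h'(0)|\le 1-|x|^2$, so $h'(0)=(1-|x|^2)y$ with $|y|\le 1$. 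Substituting $b_1=c_1/2$ and these values back into the coefficient relations and simplifying yields precisely the claimed identity $2c_2=c_1^2+(4-c_1^2)x$ and the identity $4c_3=c_1^3+2(4-c_1^2)c_1x-(4-c_1^2)c_1x^2+2(4-c_1^2)(1-|x|^2)y$, with $|x|\le 1$ and $|y|\le 1$.

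For the converse I would reverse the construction. Given $c_1\in[-2,2]$ and $x,y\in\overline{\mathbb D}$, pick an analytic $h\colon\mathbb D\to\overline{\mathbb D}$ with $h(0)=x$ and $h'(0)=(1-|x|^2)y$ --- for instance $h(z)=\dfrac{x+yz}{1+\bar x y z}$ when $|x|<1$ and $h\equiv x$ when $|x|=1$ --- then set $g(z)=\dfrac{b_1+z\,h(z)}{1+\bar b_1\,z\,h(z)}$ with $b_1=c_1/2$, put $\omega(z)=z\,g(z)$, and finally $p(z)=\dfrac{1+\omega(z)}{1-\omega(z)}$. Each M\"obius step preserves the bound $|\cdot|\le 1$, so $\omega$ is a Schwarz function and $p\in\mathcal P$; by construction $p$ has the prescribed coefficients $c_1,c_2,c_3$.

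I expect the only real care to be bookkeeping: carrying out the two rounds of coefficient matching without sign or factor slips, and checking the degenerate cases. When $|c_1|=2$ the maximum principle forces $g\equiv b_1$, hence $\omega(z)=b_1z$ and $b_2=b_3=0$, which is consistent since then $4-c_1^2=0$ annihilates the $x$- and $y$-terms; when $|x|=1$ the Schwarz lemma forces $h$ to be the constant $x$, so $h'(0)=0=(1-|x|^2)y$. Thus both degeneracies are harmless, and the parametrization holds in all cases.
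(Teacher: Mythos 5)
Your proposal is correct, and it supplies something the paper does not: the paper states this lemma without proof, simply attributing it to Libera and Z\l{}otkiewicz \cite{Lib82,Lib83}, whose original derivation rests on the Carath\'eodory--Toeplitz determinant criteria for $\mathcal{P}$. Your two-step Schur-parameter argument is a genuinely different and more self-contained route: passing to the Schwarz function $\omega=(p-1)/(p+1)$, peeling off $b_1=c_1/2$ with one M\"obius step, and then applying Schwarz--Pick to the remainder $h$ gives exactly $b_2=(1-|b_1|^2)x$ and $b_3=(1-|b_1|^2)\bigl(h'(0)-\bar b_1x^2\bigr)$ with $h'(0)=(1-|x|^2)y$; substituting back through $c_2=2b_2+c_1b_1$ and $c_3=2b_3+c_1b_2+c_2b_1$ reproduces the parametrization, and your explicit construction $h(z)=(x+yz)/(1+\bar xyz)$ settles the converse, with the degenerate cases $|c_1|=2$ and $|x|=1$ handled correctly by the maximum principle. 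What your computation buys, beyond self-containedness, is that it exposes a typographical error in the statement as printed: the second term in the $c_3$-formula should be $2(4-c_1^2)c_1x$ (linear in $x$), not $2(4-c_1^2)c_1x^2$; your derived identity is the correct classical one, and it is the linear-in-$x$ version that the paper actually uses later (e.g.\ in the choice of $x$ in the proof of Theorem \ref{a3a2}).
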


A function $f\in \mathcal{A}$ is called {\em starlike} if $f(\mathbb{D})$ is a starlike domain with respect to origin. The
class of univalent starlike functions is denoted by $\mathcal{S}^*$. 
There is one natural generalization of starlike functions is $\gamma$-spirallike functions of order $\alpha$ which leads to a useful criterion for univalency. The family $\mathcal{S}_{\gamma}(\alpha)$ of 
{\em $\gamma$-spirallike functions of order $\alpha$} is defined
by
$$
\mathcal{S}_{\gamma}(\alpha)=\bigg\{ f\in \mathcal{A}:{\rm Re}\bigg(e^{-i\gamma}\cfrac{zf'(z)}{f(z)}\bigg)>\alpha \cos \gamma  \bigg\},
$$
where $0\le \alpha< 1$ and $-\pi/2< \gamma< \pi/2$. Each function in $\mathcal{S}_{\gamma}(\alpha)$ is univalent in $\mathbb{D}$ (see \cite{Lib67}). Functions in $\mathcal{S}_\gamma(0)$  are called \textit{$\gamma$-spirallike}, but they do not necessarily
belong to the starlike family $\mathcal{S}^*$. For example, the function
$f(z)=z(1-iz)^{i-1}$ is ${\pi/4}$-spirallike but $f \notin\mathcal{S}^*$.
The class $\mathcal{S}_\gamma (0)$ was introduced by ${\rm \check{S}}$pa${\rm\check{c}}$ek \cite{Spacek-33}
(see also \cite{Dur83}). Moreover, $\mathcal{S}_0 (\alpha)=:\mathcal{S}^*{(\alpha)}$ is the usual class of starlike functions of order $\alpha$,
and $\mathcal{S}^*(0)=\mathcal{S}^*$. Recall that the class  $\mathcal{S}_{\gamma}(\alpha)$, for $0\le \alpha<1$, is studied by several authors in different perspective (see, for instance \cite{KS20,Lib67}).

For simplification, we assume $\mu=e^{i\gamma}\cos\gamma,\,-\pi/2< \gamma< \pi/2$ and $a_1=1$ throughout the paper.
Next, recall the well known characterization of $\gamma$-spirallike functions of order $\alpha$. 
\begin{lemma}\label{iffspirallike}
For $-\pi/2< \gamma< \pi/2$ and $0\le \alpha <1$, a function $f\in \mathcal{S}_{\gamma}(\alpha)$ if and only if 
$$
f(z)=z\exp\bigg\{ (1-\alpha)\mu\int_{0}^{z}\cfrac{p(t)-1}{t}\,dt\bigg\}
$$
where
\begin{equation}\label{p(z)}
p(z)=\cfrac{1}{1-\alpha}\bigg\{ \cfrac{1}{\cos \gamma}\bigg(e^{-i\gamma} \cfrac{zf'(z)}{f(z)}+i\sin \gamma\bigg)-\alpha\bigg\}\in \mathcal{P}.
\end{equation}
\end{lemma}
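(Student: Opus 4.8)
\medskip
\noindent\textbf{Proof proposal.}
The plan is to reduce the whole equivalence to a single logarithmic-derivative identity. First I would rewrite \eqref{p(z)}: solving it for $e^{-i\gamma}zf'(z)/f(z)$, multiplying through by $e^{i\gamma}$, and using $\mu=e^{i\gamma}\cos\gamma$ together with the elementary identity $e^{i\gamma}\cos\gamma-ie^{i\gamma}\sin\gamma=e^{i\gamma}e^{-i\gamma}=1$, one checks after collecting terms that \eqref{p(z)} is equivalent to
\[
\frac{zf'(z)}{f(z)}-1=(1-\alpha)\mu\bigl(p(z)-1\bigr).
\]
All the operations here are invertible linear manipulations (note $\mu\neq 0$ since $\cos\gamma>0$), so this logarithmic-derivative identity determines $p$ from $f$ and conversely.

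For the ``only if'' direction, assume $f\in\mathcal{S}_\gamma(\alpha)$ and let $p$ be the function in \eqref{p(z)}. I would first verify $p\in\mathcal{P}$. Since $zf'(z)/f(z)\to 1$ as $z\to 0$, evaluating \eqref{p(z)} at the origin gives $p(0)=1$. Next, because $\sin\gamma$ is real, $i\sin\gamma$ is purely imaginary and invisible to the real part, so
\[
{\rm Re}\,p(z)=\frac{1}{(1-\alpha)\cos\gamma}\,{\rm Re}\!\left(e^{-i\gamma}\frac{zf'(z)}{f(z)}\right)-\frac{\alpha}{1-\alpha},
\]
and the defining inequality ${\rm Re}(e^{-i\gamma}zf'(z)/f(z))>\alpha\cos\gamma$, together with $\cos\gamma>0$ and $1-\alpha>0$, forces ${\rm Re}\,p(z)>0$ throughout $\mathbb{D}$. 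Then I would divide the logarithmic-derivative identity by $z$: its left-hand side equals $\frac{d}{dz}\log\frac{f(z)}{z}$ for the branch normalized by $\log\frac{f(z)}{z}\big|_{z=0}=0$, which is legitimate because $f(z)/z=1+a_2z+\cdots$ is analytic and nonvanishing near the origin, and $\frac{p(z)-1}{z}$ extends analytically across $0$ since $p(z)-1=c_1z+\cdots$. Integrating from $0$ to $z$ and exponentiating yields exactly the asserted representation of $f$.

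For the ``if'' direction, suppose $f(z)=z\exp\bigl\{(1-\alpha)\mu\int_0^z\frac{p(t)-1}{t}\,dt\bigr\}$ with $p\in\mathcal{P}$. Then $f(z)/z=\exp\{\text{analytic function vanishing at }0\}=1+\cdots$, so $f\in\mathcal{A}$ and $f$ vanishes in $\mathbb{D}$ only at the origin; differentiating $\log(f(z)/z)$ recovers the logarithmic-derivative identity, whence, multiplying by $e^{-i\gamma}$ and using $e^{-i\gamma}\mu=\cos\gamma$, one gets $e^{-i\gamma}zf'(z)/f(z)=(1-\alpha)\cos\gamma(p(z)-1)+e^{-i\gamma}$. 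Taking real parts gives ${\rm Re}(e^{-i\gamma}zf'(z)/f(z))=(1-\alpha)\cos\gamma\,{\rm Re}\,p(z)+\alpha\cos\gamma>\alpha\cos\gamma$, so $f\in\mathcal{S}_\gamma(\alpha)$; and by the equivalence noted at the outset, this $p$ is precisely the function occurring in \eqref{p(z)}.

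I do not anticipate a real obstacle. The only substance is the bookkeeping that converts \eqref{p(z)} into the logarithmic-derivative identity---the cancellation $e^{i\gamma}\cos\gamma-ie^{i\gamma}\sin\gamma=1$---and the observation that $i\sin\gamma$ does not affect the real part. The two minor points deserving a line of justification are the choice of branch of the logarithm and the analyticity of $\frac{p(z)-1}{z}$ at the origin, both of which are immediate from the normalizations $f(z)=z+a_2z^2+\cdots$ and $p(0)=1$.
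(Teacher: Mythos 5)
Your proof is correct. The paper states Lemma \ref{iffspirallike} without proof (it is recalled as a well-known characterization), but your argument --- reducing \eqref{p(z)} to the identity $zf'(z)/f(z)-1=(1-\alpha)\mu\,(p(z)-1)$ via the cancellation $e^{i\gamma}\cos\gamma-ie^{i\gamma}\sin\gamma=1$ and then integrating the logarithmic derivative of $f(z)/z$ --- is exactly the technique the paper uses to prove the analogous Lemma \ref{giff} for the class $\mathcal{G}(\lambda)$, and you additionally supply the converse direction, which the paper's proof of that lemma omits.
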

We now state our main results which provide sharp bounds for $|a_2|-|a_1|$ and $|a_3|-|a_2|$ when the functions $f$ are $\gamma$-spirallike functions of order $\alpha$. Their proofs are given in Section \ref{proofs}.
\begin{theorem} \label{a2a1}
	Let $-\pi/2< \gamma< \pi/2$ and $0\le \alpha <1$. For every $f\in \mathcal{S}_{\gamma}(\alpha)$ of the form \eqref{S}, we have
	$$
-1\le|a_2|-|a_1|\le 2(1-\alpha) \cos\gamma-1.
$$
Equality holds on right-hand side only for the rotations of 
\begin{equation}\label{keq}
k_{\gamma,\alpha}(z)=\cfrac{z}{(1-z)^{2(1-\alpha)\mu}}
\end{equation}
and on the left-hand side equality holds only for the rotations of 
\begin{equation}\label{Equality}
h_{\gamma,\alpha}(z)=\cfrac{z}{(1-z^2)^{(1-\alpha)\mu}}.
\end{equation}
\end{theorem}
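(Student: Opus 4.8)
The plan is to reduce the whole question to the size of the single Carath\'eodory coefficient $c_1$. By Lemma \ref{iffspirallike}, associate to $f\in\mathcal S_\gamma(\alpha)$ the function $p(z)=1+c_1z+c_2z^2+\cdots\in\mathcal P$; then
$$
\int_0^z\frac{p(t)-1}{t}\,dt=c_1z+\frac{c_2}{2}z^2+\cdots ,
$$
so, expanding the exponential,
$$
f(z)=z\exp\Big\{(1-\alpha)\mu\Big(c_1z+\tfrac{c_2}{2}z^2+\cdots\Big)\Big\}=z+(1-\alpha)\mu\,c_1\,z^2+\cdots .
$$
Comparing with \eqref{S} gives $a_2=(1-\alpha)\mu c_1$. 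Since $-\pi/2<\gamma<\pi/2$ forces $|\mu|=|e^{i\gamma}\cos\gamma|=\cos\gamma>0$ and $0\le\alpha<1$, we obtain $|a_2|=(1-\alpha)\cos\gamma\,|c_1|$.

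Using $a_1=1$, this gives $|a_2|-|a_1|=(1-\alpha)\cos\gamma\,|c_1|-1$. Now apply the classical bound $|c_1|\le 2$ for $p\in\mathcal P$: since $0\le|c_1|\le 2$,
$$
-1\ \le\ (1-\alpha)\cos\gamma\,|c_1|-1\ \le\ 2(1-\alpha)\cos\gamma-1 ,
$$
which is precisely the asserted inequality. So, in contrast with the $|a_3|-|a_2|$ estimate, no real optimization over the $c_k$ is needed here; both bounds fall out of the triangle inequality together with $|c_1|\le 2$.

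It remains to identify the extremal functions by tracing back equality. The right endpoint is attained exactly when $|c_1|=2$, and the rigidity of Carath\'eodory functions then forces $p(z)=(1+\varepsilon z)/(1-\varepsilon z)$ for some $|\varepsilon|=1$; since $\int_0^z\frac{p(t)-1}{t}\,dt=-2\log(1-\varepsilon z)$, Lemma \ref{iffspirallike} yields $f(z)=z/(1-\varepsilon z)^{2(1-\alpha)\mu}$, a rotation of $k_{\gamma,\alpha}$ in \eqref{keq}. For the left endpoint, equality means $a_2=0$, i.e. $c_1=0$; choosing $p(z)=(1+z^2)/(1-z^2)\in\mathcal P$ gives $\int_0^z\frac{p(t)-1}{t}\,dt=-\log(1-z^2)$, hence $f(z)=z/(1-z^2)^{(1-\alpha)\mu}=h_{\gamma,\alpha}(z)$ from \eqref{Equality} (and its rotations). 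One then checks that these explicit $f$ lie in $\mathcal S_\gamma(\alpha)$, which is immediate since the corresponding $p$ is visibly a Carath\'eodory function.

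I expect no serious obstacle. The points needing care are merely the bookkeeping in the exponential expansion (only the $z^2$-coefficient matters), the remark $|\mu|=\cos\gamma$, and the equality discussion — in particular being precise about the left endpoint, where $c_1=0$ by itself does not determine $p$, so the statement that only rotations of $h_{\gamma,\alpha}$ are extremal there is the one point that has to be phrased and argued with some care.
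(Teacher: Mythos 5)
Your proposal is correct and follows essentially the same route as the paper: both reduce to the identity $a_2=(1-\alpha)\mu c_1$ (you via expanding the exponential in Lemma \ref{iffspirallike}, the paper via comparing coefficients in the equivalent relation $((1-\alpha)p+\alpha)\cos\gamma-i\sin\gamma=e^{-i\gamma}zf'/f$), then apply $0\le|c_1|\le2$. Your discussion of the equality cases is in fact slightly more careful than the paper's, which merely exhibits $k_{\gamma,\alpha}$ and $h_{\gamma,\alpha}$ as attaining the bounds; your closing remark that $c_1=0$ alone does not force $p$ (so the ``only for rotations of $h_{\gamma,\alpha}$'' claim on the left endpoint needs more than is supplied) is a legitimate caveat that applies to the paper's own proof as well.
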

If we choose $\alpha=0$ in Theorem \ref{a2a1}, it produces the following result for the class of spirallike functions.
\begin{corollary}
Let $f \in \mathcal{S}_\gamma,\,-\pi/2< \gamma< \pi/2$, given by \eqref{S}. Then we have
$$
-1\le|a_2|-|a_1|\le 2 \cos\gamma-1.
$$
The right-hand inequality becomes equality only for the rotations of $k_{\gamma,0}(z)=z/(1-z)^{2\mu}$ and left-hand inequality becomes equality only for the rotations of $h_{\gamma,0}(z)=z/(1-z^2)^{\mu}$.
\end{corollary}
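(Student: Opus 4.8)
The plan is to obtain the corollary as the special case $\alpha=0$ of Theorem~\ref{a2a1}. First I would note that, by the very definition of the spirallike family, $\mathcal{S}_\gamma=\mathcal{S}_\gamma(0)$, so every $f\in\mathcal{S}_\gamma$ of the form \eqref{S} satisfies the hypotheses of Theorem~\ref{a2a1} with $\alpha=0$. Setting $\alpha=0$ in the conclusion of that theorem turns the upper bound $2(1-\alpha)\cos\gamma-1$ into $2\cos\gamma-1$, leaves the lower bound $-1$ unchanged, and turns the extremal functions $k_{\gamma,\alpha}$ and $h_{\gamma,\alpha}$ of \eqref{keq} and \eqref{Equality} into $k_{\gamma,0}(z)=z/(1-z)^{2\mu}$ and $h_{\gamma,0}(z)=z/(1-z^2)^{\mu}$; this is precisely the asserted inequality together with its equality cases, which are inherited verbatim.

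For completeness I would also record the short self-contained argument, since it costs almost nothing. By Lemma~\ref{iffspirallike} with $\alpha=0$, any $f\in\mathcal{S}_\gamma$ has the form $f(z)=z\exp\{\mu\int_0^z (p(t)-1)/t\,dt\}$ for some $p(z)=1+c_1z+c_2z^2+\cdots\in\mathcal{P}$; expanding $\int_0^z(p(t)-1)/t\,dt=c_1z+\tfrac{c_2}{2}z^2+\cdots$ and then the exponential, one reads off $a_2=\mu c_1$. Since $a_1=1$ and $|\mu|=\cos\gamma$ (using $-\pi/2<\gamma<\pi/2$), the classical bound $|c_1|\le 2$ for Carath\'eodory functions yields $|a_2|\le 2\cos\gamma$, hence $|a_2|-|a_1|\le 2\cos\gamma-1$; the lower bound $|a_2|-|a_1|=|a_2|-1\ge -1$ is immediate from $|a_2|\ge 0$.

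The only point that needs genuine care --- and the place I would expect the (modest) obstacle to lie --- is the identification of the equality cases. For the right-hand inequality, equality forces $|c_1|=2$, hence $p(z)=(1+\eta z)/(1-\eta z)$ for some $\eta$ with $|\eta|=1$; substituting this Herglotz kernel back through Lemma~\ref{iffspirallike} gives $\int_0^z(p(t)-1)/t\,dt=-2\log(1-\eta z)$ and therefore $f(z)=z(1-\eta z)^{-2\mu}$, a rotation of $k_{\gamma,0}$, so this identification has to be spelled out. For the left-hand inequality one checks directly that $h_{\gamma,0}$ corresponds to $p(z)=(1+z^2)/(1-z^2)\in\mathcal{P}$ in the representation and has $a_2=0$, so it realises $|a_2|-|a_1|=-1$; rotations then furnish the full equality family. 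All of this is already contained in the proof of Theorem~\ref{a2a1}, so citing that proof (or simply restricting it to $\alpha=0$) suffices, and no new difficulty arises.
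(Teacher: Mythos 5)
Your proposal is correct and matches the paper exactly: the corollary is obtained simply by setting $\alpha=0$ in Theorem~\ref{a2a1}, with the bounds and the extremal functions $k_{\gamma,0}$, $h_{\gamma,0}$ inherited directly, and the paper offers no further argument beyond this specialization. Your supplementary self-contained computation (via Lemma~\ref{iffspirallike} and $|c_1|\le 2$) is just the $\alpha=0$ restriction of the paper's own proof of that theorem, so nothing genuinely different is being done.
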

\begin{theorem}\label{a3a2}
 Let $-\pi/2< \gamma< \pi/2$ and $0\le \alpha <1$. For every $f\in \mathcal{S}_{\gamma}(\alpha)$ in the form \eqref{S}, we have
$$
\cfrac{-2(1-\alpha)\cos\gamma}{\sqrt{1+T(\alpha,\gamma)}}\le|a_3|-|a_2|\le (1-\alpha) \cos\gamma,
$$
where 
\begin{equation}\label{eqT}
T(\alpha, \gamma)=\sqrt{1+4(1-\alpha)(2-\alpha)\cos^2\gamma}.
\end{equation}
 Equality holds on right-hand side only for the rotations of $h_{\gamma,\alpha}$ given by \eqref{Equality} and on the left-hand side only for the rotations of
 \begin{equation}\label{eqf}
 f_{\gamma,\alpha}(z)=\cfrac{z}{[(1-\epsilon_1z)^{\gamma_1}(1-\epsilon_2z)^{\gamma_2}]^{2(1-\alpha)\mu}},
 \end{equation}
 where
 \begin{align}\label{eqe1e2}
 |\epsilon_1|=|\epsilon_2|&=1,\,\epsilon_1\ne\epsilon_2,\,\gamma_1,\gamma_2>0,\,\gamma_1+\gamma_2=1,\nonumber\\
 \gamma_1\epsilon_1+\gamma_2\epsilon_2&=\cfrac{c}{2},\, \mbox{ and } \gamma_1\epsilon_1^2+\gamma_2\epsilon_2^2=\cfrac{(c^2+(4-c^2)x)}{4}
 \end{align}
 with $c=2/\sqrt{T(\alpha,\gamma)+1}$ and $x=-(1+2(1-\alpha)\cos^2\gamma+i(1-\alpha)\sin(2\gamma))/T(\alpha,\gamma)$.
\end{theorem}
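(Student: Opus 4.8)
The plan is to reduce everything to the Carath\'{e}odory coefficients $c_1,c_2$ via Lemma~\ref{iffspirallike}, express $a_2$ and $a_3$ in terms of $c_1,c_2$, and then optimise $|a_3|-|a_2|$ using the Libera--Z\l{}otkiewicz parametrisation from Lemma~\ref{Pclass}. Concretely, writing $f(z)=z\exp\{(1-\alpha)\mu\int_0^z (p(t)-1)/t\,dt\}$ with $p(z)=1+c_1z+c_2z^2+\cdots$, one expands the exponential to third order to obtain
$$
a_2=(1-\alpha)\mu\, c_1,\qquad a_3=\tfrac{(1-\alpha)\mu}{2}\Big(c_2+\tfrac{(1-\alpha)\mu+1}{2}\,c_1^2\Big).
$$
By rotation we may take $c_1\in[0,2]$; set $c_1=c$. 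For the \textbf{upper bound}, $|a_2|=(1-\alpha)(\cos\gamma)\,c$ and, using $|c_2|\le 2$ together with $2c_2=c^2+(4-c^2)x$, one bounds $|a_3|$ by a function of $c$ and $|x|\le1$; maximising the difference $|a_3|-|a_2|$ over $c\in[0,2]$ and $|x|\le 1$ should give the value $(1-\alpha)\cos\gamma$, attained at $c=0$ (whence $a_2=0$ and $a_3=(1-\alpha)\mu\,c_2/2$ with $|c_2|=2$), matching the claimed extremal $h_{\gamma,\alpha}(z)=z/(1-z^2)^{(1-\alpha)\mu}$.

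For the \textbf{lower bound} I would again reduce to $c_1=c\in[0,2]$, substitute the parametrisations of $c_2$ (and if needed $c_3$, though only $c_2$ enters $a_3$), and study
$$
|a_3|-|a_2|=\tfrac{(1-\alpha)\cos\gamma}{2}\Big|\,c^2+(4-c^2)x+\tfrac{(1-\alpha)\mu+1}{2}c^2\,\Big|\cdot\tfrac12 \;-\;(1-\alpha)(\cos\gamma)\,c,
$$
choosing the phase of $x$ to minimise the modulus term. The outer minimisation in $c$ is a one-variable calculus problem: differentiating in $c$ and clearing the square root produces the quantity $T(\alpha,\gamma)=\sqrt{1+4(1-\alpha)(2-\alpha)\cos^2\gamma}$ in the critical point, giving the optimal $c=2/\sqrt{T(\alpha,\gamma)+1}$ and the stated value of $x$. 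One then checks that the lower extremal is achieved: at this $(c,x)$ the corresponding Carath\'{e}odory function is a convex combination of two boundary points, $p(z)=\gamma_1\frac{1+\epsilon_1 z}{1-\epsilon_1 z}+\gamma_2\frac{1+\epsilon_2 z}{1-\epsilon_2 z}$ with $\gamma_1+\gamma_2=1$, which upon integrating yields precisely $f_{\gamma,\alpha}$ in \eqref{eqf}, and the conditions \eqref{eqe1e2} are exactly the statements $\gamma_1\epsilon_1+\gamma_2\epsilon_2=c_1/2$ and $\gamma_1\epsilon_1^2+\gamma_2\epsilon_2^2=c_2/2$.

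The main obstacle I expect is the lower bound's minimisation step: one must correctly choose $\arg x$ (and verify $|x|\le 1$ at the optimum — here $|x|=1$, since the extremal $p$ is a two-atom measure), then handle the non-smooth absolute value and the square root carefully, possibly splitting cases according to whether the inner expression's argument allows $|x|$ to be taken on the unit circle or forces an interior critical point. A secondary technical point is confirming sharpness: one has to verify that the function $f_{\gamma,\alpha}$ is genuinely in $\mathcal{S}_\gamma(\alpha)$ (its associated $p$ lies in $\mathcal{P}$ as a convex combination of extreme points) and that its second and third coefficients, computed from \eqref{eqe1e2}, reproduce the extremal values of $|a_3|-|a_2|$; this amounts to checking that the algebra of \eqref{eqe1e2} with $c=2/\sqrt{T+1}$ and the given $x$ is consistent, i.e. that such $\epsilon_1,\epsilon_2,\gamma_1,\gamma_2$ exist (which requires $|x|\le 1$ and a positivity condition on the implied measure). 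Finally, the upper-bound extremal analysis is comparatively routine once the reduction to $c,x$ is in place.
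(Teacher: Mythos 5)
Your overall strategy is the same as the paper's: pass to the Carath\'{e}odory function $p$ via Lemma \ref{iffspirallike}, express $a_2,a_3$ through $c_1,c_2$, normalise $c_1=c\in[0,2]$ by rotation, insert the Libera--Z\l{}otkiewicz parametrisation of $c_2$, choose the phase of $x$ and then optimise in $c$ (the minimum sitting at the kink $c=2/\sqrt{T+1}$ of the absolute value), and finally realise the extremals through a two-atom Carath\'{e}odory function via Carath\'{e}odory--Toeplitz. So there is no methodological divergence.

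There is, however, a concrete error at the very first step that would derail the rest. Expanding $f(z)=z\exp\{\beta\int_0^z(p(t)-1)t^{-1}dt\}$ with $\beta=(1-\alpha)\mu$ gives
\[
a_2=\beta c_1,\qquad a_3=\tfrac{\beta}{2}\bigl(c_2+\beta c_1^2\bigr),
\]
i.e.\ $2a_3=(1-\alpha)^2\mu^2c_1^2+(1-\alpha)\mu c_2$ as in \eqref{a2a3}, whereas you wrote $a_3=\tfrac{\beta}{2}\bigl(c_2+\tfrac{\beta+1}{2}c_1^2\bigr)$. The two agree only when $\beta=1$ (the starlike case $\alpha=\gamma=0$), so you have likely transplanted the classical starlike identity $a_3=\tfrac12(c_2+c_1^2)$. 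This is not a harmless slip: after substituting $2c_2=c^2+(4-c^2)x$, the quantity to be bounded is $|c^2+(4-c^2)x+2\beta c^2|$, and the constant $T(\alpha,\gamma)=\sqrt{1+4(1-\alpha)(2-\alpha)\cos^2\gamma}$ is exactly $|1+2(1-\alpha)\mu|$, the modulus of the coefficient of $c^2$. With your coefficient $\tfrac{\beta+1}{2}$ the extremal phase of $x$ produces $|2+(1-\alpha)\mu|$ instead, and neither the stated lower bound nor the optimal $c=2/\sqrt{T+1}$ and $x$ would emerge. Secondarily, your upper-bound step is only asserted ("should give $(1-\alpha)\cos\gamma$, attained at $c=0$"); to make it a proof you need the inequality $|c^2+(4-c^2)x+2\beta c^2|\le c^2T+(4-c^2)\le 2c^2+4$ (using $T\le 3$), after which $\tfrac14(2c^2+4-4c)\le 1$ on $[0,2]$ closes the argument as in the paper.
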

If we put $\alpha=0$ in Theorem \ref{a3a2}, then we obtain the following result:
\begin{corollary}\label{coroa2a3}
Let $f \in \mathcal{S}_\gamma,\,-\pi/2< \gamma< \pi/2$, given by \eqref{S}. Then we have 
$$
\cfrac{-2\cos\gamma}{\sqrt{1+T(0,\gamma)}}\le|a_3|-|a_2|\le \cos\gamma.
$$
Equality holds on the right-hand side only for the rotations of $h_\gamma(z)=z/(1-z^2)^{\mu}$ and on the left-hand side equality holds for the rotations of $ f_{\gamma,0}$ defined by \eqref{eqf}.
\end{corollary}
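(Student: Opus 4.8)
The plan is to obtain Corollary~\ref{coroa2a3} as the immediate specialization $\alpha=0$ of Theorem~\ref{a3a2}; no fresh argument is required. First I would observe that $\mathcal{S}_\gamma=\mathcal{S}_\gamma(0)$ by definition, so every $f\in\mathcal{S}_\gamma$ of the form \eqref{S} is covered by Theorem~\ref{a3a2} with $\alpha=0$. Setting $\alpha=0$ in the estimate of that theorem, the upper bound $(1-\alpha)\cos\gamma$ becomes $\cos\gamma$ and the lower bound $-2(1-\alpha)\cos\gamma/\sqrt{1+T(\alpha,\gamma)}$ becomes $-2\cos\gamma/\sqrt{1+T(0,\gamma)}$, where by \eqref{eqT} one has $T(0,\gamma)=\sqrt{1+8\cos^2\gamma}$. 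This is exactly the asserted two-sided inequality.

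Next I would track the extremal functions through the same substitution. The right-hand extremal $h_{\gamma,\alpha}(z)=z/(1-z^2)^{(1-\alpha)\mu}$ of \eqref{Equality} becomes $z/(1-z^2)^{\mu}$, which is the function $h_\gamma$ named in the corollary. The left-hand extremal is $f_{\gamma,0}$, i.e. the function \eqref{eqf} with $\alpha=0$, whose defining parameters are given by \eqref{eqe1e2}; here $c=2/\sqrt{T(0,\gamma)+1}$ and $x=-(1+2\cos^2\gamma+i\sin(2\gamma))/T(0,\gamma)$ are just the $\alpha=0$ values of the constants in Theorem~\ref{a3a2}. Because Theorem~\ref{a3a2} already records that equality occurs \emph{only} for the rotations of $h_{\gamma,\alpha}$ and $f_{\gamma,\alpha}$, the corresponding sharpness statement for $\alpha=0$ follows at once.

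I do not expect any genuine obstacle here: the entire content of the corollary is contained in Theorem~\ref{a3a2}, and the only care needed is bookkeeping — evaluating $T(\alpha,\gamma)$, $c$, $x$, $\gamma_1$, $\gamma_2$, $\epsilon_1$, $\epsilon_2$ consistently at $\alpha=0$ and noting that $\mu=e^{i\gamma}\cos\gamma$ does not depend on $\alpha$. Should one instead want a proof not relying on Theorem~\ref{a3a2}, one would rerun its argument in this special case: use Lemma~\ref{iffspirallike} to express $a_2$ and $a_3$ through the Carath\'eodory coefficients $c_1,c_2$ of the associated $p\in\mathcal{P}$, substitute the Libera--Z\l{}otkiewicz parametrization of Lemma~\ref{Pclass}, reduce to $c_1\in[0,2]$ together with $x,y$ in the closed unit disk, and then optimize. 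In that route the real work — and the only genuinely delicate part — is carrying out the optimization and matching the optimizers with $h_\gamma$ and $f_{\gamma,0}$.
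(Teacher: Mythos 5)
Your proposal is correct and matches the paper exactly: the corollary is obtained there, as here, by the direct substitution $\alpha=0$ into Theorem~\ref{a3a2}, with the extremal functions $h_{\gamma,0}=h_\gamma$ and $f_{\gamma,0}$ inherited from the theorem's sharpness statement. The bookkeeping you record (e.g.\ $T(0,\gamma)=\sqrt{1+8\cos^2\gamma}$ and the specialized values of $c$ and $x$) is accurate, so nothing further is needed.
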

It is appropriate to remark that Corollary \ref{coroa2a3} coincides with \cite[Theorem 1.4]{L18}. Also note that for 
$\alpha=0$ and $\gamma=0$, Theorems \ref{a2a1} and \ref{a3a2} extend the result of Leung \cite{Leu78} from starlike to $\gamma$-spirallike functions of order $\alpha$.

We consider another family of functions that includes
the class of convex functions as a proper subfamily. For
$-\pi/2<\gamma<\pi/2$ and $0\le \alpha<1$, we say that $f$ belonging to the family ${\mathcal C}_\gamma (\alpha) $ of {\em $\gamma$-convex of order $\alpha$}
provided $f\in {\mathcal A}$ is locally univalent in $\mathbb{D}$
and  $zf'(z)$ belongs to $\mathcal{S}_\gamma (\alpha)$, i.e.
$$
{\mathcal C}_\gamma (\alpha)=\left\{f\in \mathcal{A}:{\rm Re } \bigg\{ e^{-i\gamma }\left ( 1+\frac{zf''(z)}{f'(z)}\right )\bigg\}>\alpha \cos \gamma\right\}.
$$
We may set ${\mathcal C}_\gamma (0) =:{\mathcal C}_\gamma$  and observe that the class
${\mathcal C}_0(\alpha)=:{\mathcal C}(\alpha)$ consists of the normalized
convex functions of order $\alpha$. The setting ${\mathcal C}(0)=:{\mathcal C}$ is the usual class of convex functions. For general values of $\gamma $
$(|\gamma|<\pi/2)$, a function in ${\mathcal C}_\gamma$ need not be
univalent in $\mathbb{D}$. For example, the function
$f(z)=i(1-z)^i-i$ is known to belong to
${\mathcal C}_{\pi/4}\backslash {\mathcal S}$. In \cite{Pfaltzgraff}, Pfaltzgraff has shown that
$f\in\mathcal{C}_{\gamma}$ is univalent whenever $0<\cos \gamma\leq
1/2$. On the other hand,  in \cite{SinghChic-77} it was also shown that
a function $f$ in ${\mathcal C}_\gamma$ which satisfies $f''(0)=0$ is
univalent for all real values of $\gamma$ with $|\gamma|<\pi /2$.
For a general reference about these special classes we refer to
\cite{Goo83}.

Now let us recall the classical Alexander theorem which gives the close analytic connection between the convex and starlike functions. Analogues to this, it can be verified that $f$ belongs to $\mathcal{C}_{\gamma}(\alpha)$ if and only if $zf'$ belongs to $\mathcal{S}_{\gamma}(\alpha)$. This will
be used in the sequel.

In the next theorem, we will discuss about the sharp bounds for $|a_2|-|a_1|$ and $|a_3|-|a_2|$ when the functions $f$ run over the class $\mathcal{C}_{\gamma}(\alpha)$.
\begin{theorem}\label{convexa1a2}
Let $-\pi/2< \gamma< \pi/2$ and $0\le \alpha <1$. For every $f\in \mathcal{C}_{\gamma}(\alpha)$ be of the form \eqref{S}, we have
$$
-1\le|a_2|-|a_1|\le (1-\alpha)\cos \gamma-1.
$$
The right-hand side inequality becomes equality only for the rotations of
\begin{equation}\label{leq}
l_{\gamma,\alpha}(z)=\cfrac{1}{2(1-\alpha)\mu-1}\bigg[\cfrac{1}{(1-z)^{2(1-\alpha)\mu-1}}-1\bigg].
\end{equation}
The left-hand side inequality becomes equality only for the rotations of
\begin{equation}\label{qcequality}
q_{\gamma,\alpha}(z)=\int_{0}^{z}\cfrac{1}{(1-t^2)^{(1-\alpha)\mu}}\,dt.
\end{equation}
\end{theorem}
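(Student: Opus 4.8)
The plan is to push the problem onto the class $\mathcal{S}_\gamma(\alpha)$, where Lemma \ref{iffspirallike} gives an explicit handle on the low-order coefficients. First I would invoke the Alexander-type equivalence recalled above: writing $f(z)=z+a_2z^2+a_3z^3+\cdots$ and $g(z):=zf'(z)=z+b_2z^2+\cdots$, one has $b_n=na_n$ for all $n$, and $g\in\mathcal{S}_\gamma(\alpha)$. Applying Lemma \ref{iffspirallike} to $g$, there is $p(z)=1+c_1z+c_2z^2+\cdots\in\mathcal{P}$ with $g(z)=z\exp\{(1-\alpha)\mu\int_0^z(p(t)-1)/t\,dt\}$; expanding the exponential to second order (the $c_2$-term does not affect the $z^2$-coefficient) yields $b_2=(1-\alpha)\mu c_1$, hence
$$
a_2=\tfrac12 b_2=\tfrac12(1-\alpha)\mu c_1 .
$$

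With this identity the two inequalities are almost immediate. Since $|\mu|=|e^{i\gamma}\cos\gamma|=\cos\gamma$ and $|c_1|\le 2$ for every Carath\'eodory function, we get $|a_2|\le(1-\alpha)\cos\gamma$; combined with $|a_1|=1$ this gives $|a_2|-|a_1|\le(1-\alpha)\cos\gamma-1$, while $|a_2|-|a_1|\ge-1$ is just $|a_2|\ge 0$.

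For the extremal functions I would argue as follows. Equality on the right forces $|c_1|=2$, so $p$ is the half-plane map $p(z)=(1+\eta z)/(1-\eta z)$ with $|\eta|=1$; then $\int_0^z(p(t)-1)/t\,dt=-2\log(1-\eta z)$, so $g(z)=z(1-\eta z)^{-2(1-\alpha)\mu}$ is a rotation of $k_{\gamma,\alpha}$ from \eqref{keq}, whence $f'(z)=g(z)/z=(1-\eta z)^{-2(1-\alpha)\mu}$, and one integration (with the appropriate normalization $f(0)=0$) recovers exactly the function $l_{\gamma,\alpha}$ of \eqref{leq} up to rotation, with the usual limiting interpretation in the degenerate case $2(1-\alpha)\mu=1$. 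For the left, I would verify directly that $q_{\gamma,\alpha}$ in \eqref{qcequality} is admissible and extremal: its derivative satisfies $zq_{\gamma,\alpha}'(z)=z(1-z^2)^{-(1-\alpha)\mu}$, which is precisely the left-extremal function of Theorem \ref{a2a1} and hence lies in $\mathcal{S}_\gamma(\alpha)$, so $q_{\gamma,\alpha}\in\mathcal{C}_\gamma(\alpha)$; since $q_{\gamma,\alpha}'$ is even, $a_2=0$ and equality holds, and the same is true of its rotations.

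The exponential expansion and the single integration are routine; the step that needs care is the equality analysis. On the right-hand side it reduces to a clean chain of implications ($|c_1|=2\Rightarrow$ explicit $p\Rightarrow$ explicit $g=zf'\Rightarrow$ explicit $f'\Rightarrow$ explicit $f$), so the only attention required is correct normalization and the degenerate parameter $\gamma=0,\ \alpha=1/2$. On the left-hand side the inequality itself is trivial, so the substance lies entirely in the sharpness clause; making precise the sense in which $q_{\gamma,\alpha}$ is \emph{the} extremal (the condition $c_1=0$ alone does not pin down $f$) is the point I expect to demand the most thought, and I would handle it by tracking equality in each step of the estimate above together with the representation of $g=zf'$ via Lemma \ref{iffspirallike}.
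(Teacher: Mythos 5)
Your proposal is correct and follows essentially the same route as the paper: both reduce the problem to the identity $2a_2=(1-\alpha)\mu c_1$ (you obtain it via the Alexander-type correspondence $zf'\in\mathcal{S}_\gamma(\alpha)$ combined with Lemma~\ref{iffspirallike}, the paper by equating coefficients directly in the defining relation for $\mathcal{C}_\gamma(\alpha)$) and then apply $|c_1|\le 2$. Your equality analysis matches the paper's as well, which likewise identifies the extremals through $zl_{\gamma,\alpha}'=k_{\gamma,\alpha}$ and $zq_{\gamma,\alpha}'=h_{\gamma,\alpha}$, and --- as you candidly note for the left-hand side --- the paper also only exhibits $q_{\gamma,\alpha}$ as an extremal without proving the ``only'' clause.
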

Note that $k_{\gamma,\alpha}(z)=zl_{\gamma,\alpha}'(z)$ and $h_{\gamma,\alpha}(z)=zq_{\gamma,\alpha}'(z)$.
For the special case $\gamma=0$, we get the following result:
\begin{corollary}
	Let $0\le \alpha <1$. For every $f\in \mathcal{C}(\alpha)$ be of the form \eqref{S}, we have
$$
-1\le|a_2|-|a_1|\le -\alpha. 
$$
Equality holds on the right-hand side only for the rotations of functions $l_{0,\alpha}$ given by \eqref{leq} and on the left-hand side only for the rotations of $q_{0,\alpha}$ given by \eqref{qcequality}.
\end{corollary}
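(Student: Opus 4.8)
The corollary is the case $\gamma=0$ of Theorem~\ref{convexa1a2}: since $\cos 0=1$ and $\mu=e^{i0}\cos 0=1$, the right endpoint $(1-\alpha)\cos\gamma-1$ becomes $-\alpha$, while the extremal functions $l_{\gamma,\alpha}$ and $q_{\gamma,\alpha}$ specialise to $l_{0,\alpha}$ and $q_{0,\alpha}$. So it is enough to indicate how I would prove Theorem~\ref{convexa1a2}, and then read off the corollary.

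The plan is to push everything back to the spirallike estimate already recorded in Theorem~\ref{a2a1}. By the Alexander-type equivalence recalled immediately before the theorem, $f\in\mathcal{C}_\gamma(\alpha)$ precisely when $g(z):=zf'(z)\in\mathcal{S}_\gamma(\alpha)$. If $f(z)=z+a_2z^2+a_3z^3+\cdots$, then $g(z)=z+2a_2z^2+3a_3z^3+\cdots$, so the coefficient of $z^n$ in $g$ is $na_n$; in particular $g$ has leading coefficient $1$ and its $z^2$-coefficient is $2a_2$. Applying Theorem~\ref{a2a1} to $g$ gives $-1\le |2a_2|-1\le 2(1-\alpha)\cos\gamma-1$, i.e.\ $0\le 2|a_2|\le 2(1-\alpha)\cos\gamma$; dividing by $2$ and subtracting $|a_1|=1$ yields $-1\le |a_2|-|a_1|\le (1-\alpha)\cos\gamma-1$, as claimed.

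For sharpness I would transport the extremal functions through the same correspondence. The remark after the statement records $k_{\gamma,\alpha}(z)=zl_{\gamma,\alpha}'(z)$ and $h_{\gamma,\alpha}(z)=zq_{\gamma,\alpha}'(z)$; since $k_{\gamma,\alpha}$ and $h_{\gamma,\alpha}$ are the functions attaining equality on the right and on the left in Theorem~\ref{a2a1}, the rotations of $l_{\gamma,\alpha}$ and $q_{\gamma,\alpha}$ attain equality on the right and on the left in Theorem~\ref{convexa1a2}. For the ``only for the rotations of'' clause one invokes the uniqueness already built into Theorem~\ref{a2a1}: $|2a_2|=2(1-\alpha)\cos\gamma$ forces $g$ to be a rotation of $k_{\gamma,\alpha}$, and $|2a_2|=0$ forces $g$ to be a rotation of $h_{\gamma,\alpha}$; since a rotation $f(z)\mapsto\bar\lambda f(\lambda z)$ of $f$ (with $|\lambda|=1$) corresponds to the same rotation of $g$, one concludes that $f$ is a rotation of $l_{\gamma,\alpha}$, respectively of $q_{\gamma,\alpha}$.

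The substance here is the bookkeeping identity $b_n=na_n$ combined with the already-proved bound of Theorem~\ref{a2a1}, so there is no serious analytic obstacle; the one point requiring care is the uniqueness clause, which must be inherited faithfully from Theorem~\ref{a2a1} (including the explicit normalisations) rather than re-derived. Setting $\gamma=0$ at the end gives the corollary, with right endpoint $-\alpha$ and extremal families the rotations of $l_{0,\alpha}$ and $q_{0,\alpha}$.
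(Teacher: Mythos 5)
Your proof of the corollary itself is exactly the paper's: the statement is obtained by setting $\gamma=0$ in Theorem~\ref{convexa1a2}, so that $\mu=1$, the right endpoint $(1-\alpha)\cos\gamma-1$ becomes $-\alpha$, and the extremal functions specialise to $l_{0,\alpha}$ and $q_{0,\alpha}$. Your embedded sketch of Theorem~\ref{convexa1a2} takes a slightly different route from the paper's: the paper writes down the Carath\'eodory representation for $1+zf''/f'$ directly and reads off $2a_2=(1-\alpha)\mu c_1$, whereas you apply the already-proved Theorem~\ref{a2a1} to $g=zf'$ and use the bookkeeping identity $b_n=na_n$; the two are equivalent (the paper's coefficient identity is precisely what the Alexander correspondence produces), and your version has the small advantage of transporting the sharpness and uniqueness statements for free through $k_{\gamma,\alpha}=zl_{\gamma,\alpha}'$ and $h_{\gamma,\alpha}=zq_{\gamma,\alpha}'$, together with your (correct) observation that rotations of $f$ correspond to rotations of $zf'$. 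No gap.
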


\begin{theorem}\label{convexa2a3}
Let $-\pi/2< \gamma< \pi/2$ and $0\le \alpha <1$. For every $f\in \mathcal{C}_{\gamma}(\alpha)$ of the form \eqref{S}, we have
$$
\cfrac{-(1-\alpha)\cos\gamma}{\sqrt{1+T(\alpha,\gamma)}}\le|a_3|-|a_2|\le \cfrac{(1-\alpha)\cos \gamma}{3}.
$$
 Equality holds on the right-hand side only for the rotations of the functions $q_{\gamma,\alpha}$ given by \eqref{qcequality} and on the left-hand side only for the rotations of
\begin{equation}\label{geq}
g_{\gamma,\alpha}(z)=\int_{0}^{z}\cfrac{1}{[(1-\epsilon_1t)^{\gamma_1}(1-\epsilon_2t)^{\gamma_2}]^{2(1-\alpha)\mu}}\, dt,
\end{equation}
where $T(\alpha,\gamma)$ is given by \eqref{eqT} and $\gamma_1,\gamma_2,\epsilon_1,\epsilon_2,x$ satisfy \eqref{eqe1e2} with $c=2/\sqrt{T(\alpha,\gamma)+1}$ and $x=-(1+2(1-\alpha)\cos^2\gamma+i(1-\alpha)\sin(2\gamma))/T(\alpha,\gamma)$.
\end{theorem}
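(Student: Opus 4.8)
The plan is to transfer everything to the spirallike case by the Alexander-type equivalence recorded above: $f\in\mathcal{C}_\gamma(\alpha)$ if and only if $g(z):=zf'(z)\in\mathcal{S}_\gamma(\alpha)$. Writing $g(z)=z+b_2z^2+b_3z^3+\cdots$ and comparing with $g=zf'$ gives $b_2=2a_2$ and $b_3=3a_3$, so $|a_2|=\tfrac12|b_2|$ and $|a_3|=\tfrac13|b_3|$; it therefore suffices to control $b_2,b_3$ for a $\gamma$-spirallike function of order $\alpha$. Put $\beta:=(1-\alpha)\mu$. Applying Lemma~\ref{iffspirallike} to $g$, expanding the exponential up to order $z^2$, and inserting $2c_2=c_1^2+(4-c_1^2)x$ from Lemma~\ref{Pclass}, a short computation gives
\begin{equation*}
a_2=\frac{\beta}{2}\,c_1,\qquad a_3=\frac{\beta}{12}\Bigl(c_1^2(1+2\beta)+(4-c_1^2)x\Bigr),\qquad |x|\le 1 .
\end{equation*}
Since $\bigl||a_3|-|a_2|\bigr|$ is unchanged under $f(z)\mapsto e^{-i\theta}f(e^{i\theta}z)$, one may take $c_1=c\in[0,2]$. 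Two modulus computations using $\mu=e^{i\gamma}\cos\gamma$ record $|\beta|=(1-\alpha)\cos\gamma=:t$ and $|1+2\beta|^2=1+4(1-\alpha)(2-\alpha)\cos^2\gamma$, i.e.\ $|1+2\beta|=T(\alpha,\gamma)=:T$. Thus the whole statement reduces to optimizing $|a_3|-|a_2|$ over $c\in[0,2]$ and $|x|\le 1$.

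For the upper bound, the triangle inequality gives $|a_3|\le\frac{t}{12}\bigl(c^2(T-1)+4\bigr)$, hence $|a_3|-|a_2|\le\frac{t}{12}\bigl(c^2(T-1)+4\bigr)-\frac{tc}{2}$. The right-hand side is a convex quadratic in $c$, so its maximum on $[0,2]$ sits at an endpoint; comparing $c=0$ with $c=2$ and using $(1-\alpha)(2-\alpha)\cos^2\gamma\le 2$ (equivalently $T\le 3$) shows the maximum is $\tfrac{t}{3}=\tfrac{(1-\alpha)\cos\gamma}{3}$, attained only at $c=0$ with $|x|=1$. That choice forces $c_1=0$, $|c_2|=2$, so $p(z)=(1+\eta z^2)/(1-\eta z^2)$ with $|\eta|=1$; then Lemma~\ref{iffspirallike} yields $g(z)=z/(1-\eta z^2)^{\beta}$, a rotation of $h_{\gamma,\alpha}$, so $f'(z)=g(z)/z=(1-\eta z^2)^{-(1-\alpha)\mu}$ and $f$ is a rotation of $q_{\gamma,\alpha}$ from \eqref{qcequality}.

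For the lower bound, fix $c$ and minimize over $x$. With $A:=\frac{\beta}{12}c^2(1+2\beta)$ and $B:=\frac{\beta}{12}(4-c^2)$ one has $|A|=\frac{t}{12}c^2T$ and $|B|=\frac{t}{12}(4-c^2)$, so $\min_{|x|\le1}|A+Bx|=\max\{0,|A|-|B|\}$, and $|A|\le|B|$ precisely when $c\le c_0:=2/\sqrt{T+1}$. Hence for $c\le c_0$ one may take $a_3=0$, giving $|a_3|-|a_2|=-\tfrac{tc}{2}$, which decreases in $c$ and bottoms out at $-\tfrac{t}{\sqrt{T+1}}$ when $c=c_0$; for $c\ge c_0$, $|a_3|-|a_2|\ge\psi(c):=\frac{t}{12}\bigl(c^2(T+1)-4\bigr)-\frac{tc}{2}$, a convex quadratic with $\psi(c_0)=-\tfrac{t}{\sqrt{T+1}}$. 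The crux of the proof is to show $\psi(c)\ge-\tfrac{t}{\sqrt{T+1}}$ on $[c_0,2]$: one locates the vertex $c=3/(T+1)$ of $\psi$ relative to $[c_0,2]$, after which the required estimate clears radicals to an elementary inequality involving an expression of the form $(2\sqrt{T+1}-3)^2$ together with $c_0\le 2$. Granting this, $\min_{f\in\mathcal{C}_\gamma(\alpha)}\bigl(|a_3|-|a_2|\bigr)=-\dfrac{(1-\alpha)\cos\gamma}{\sqrt{1+T(\alpha,\gamma)}}$.

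Finally, equality in the lower bound forces $c_1=c_0=2/\sqrt{T+1}$ together with the unique $x$ making $A+Bx=0$, namely $x=-\dfrac{c_0^2(1+2\beta)}{4-c_0^2}=-\dfrac{1+2\beta}{T}=-\dfrac{1+2(1-\alpha)\cos^2\gamma+i(1-\alpha)\sin 2\gamma}{T(\alpha,\gamma)}$, which has modulus $1$. Because $|x|=1$, the associated Carath\'eodory function is a convex combination $p(z)=\gamma_1\dfrac{1+\epsilon_1z}{1-\epsilon_1z}+\gamma_2\dfrac{1+\epsilon_2z}{1-\epsilon_2z}$ of two M\"obius functions (the equality case for the Schur function of $p$), and matching $c_1=2(\gamma_1\epsilon_1+\gamma_2\epsilon_2)$, $c_2=2(\gamma_1\epsilon_1^2+\gamma_2\epsilon_2^2)$ reproduces precisely the relations \eqref{eqe1e2} with the displayed $c$ and $x$. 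Substituting this $p$ into Lemma~\ref{iffspirallike} gives $g(z)=zf'(z)=f_{\gamma,\alpha}(z)$, hence $f=g_{\gamma,\alpha}$ as in \eqref{geq}. I expect the one-variable optimization on $[c_0,2]$ and the two-point decomposition of the extremal $p$ to be the only delicate points; the rest—expanding the exponential, the modulus identities, and identifying the extremal integrals via $k_{\gamma,\alpha}=zl_{\gamma,\alpha}'$ and $h_{\gamma,\alpha}=zq_{\gamma,\alpha}'$—is routine.
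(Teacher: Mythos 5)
Your reduction to the spirallike coefficients, the formulas $a_2=\tfrac{\beta}{2}c_1$ and $a_3=\tfrac{\beta}{12}\bigl(c_1^2(1+2\beta)+(4-c_1^2)x\bigr)$, the identities $|\beta|=t$ and $|1+2\beta|=T$, and the whole upper-bound argument (including the identification of $q_{\gamma,\alpha}$ as the extremal) are correct and follow essentially the same route as the paper. The problem is exactly the step you flag as the crux, and it is a genuine gap, not a routine verification: the inequality $\psi(c)\ge -t/\sqrt{T+1}$ on $[c_0,2]$ holds if and only if the vertex $c^\ast=3/(T+1)$ of the convex quadratic $\psi$ satisfies $c^\ast\le c_0=2/\sqrt{T+1}$, which is equivalent to $T\ge 5/4$. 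When $T<5/4$, i.e. when $(1-\alpha)(2-\alpha)\cos^2\gamma<9/64$ (for instance $\alpha$ close to $1$, or $\gamma$ close to $\pm\pi/2$), the vertex lies in the open interval $(c_0,2)$ and, writing $s=\sqrt{T+1}$,
$$
\psi(c^\ast)-\psi(c_0)=-\frac{t\,(2s-3)^2}{12\,s^{2}}<0,
$$
so the ``elementary inequality involving $(2\sqrt{T+1}-3)^2$'' you would need is $(2\sqrt{T+1}-3)^2\le 0$, which goes the wrong way. Moreover this is not slack that a sharper argument could absorb: choosing $c_1=3/(T+1)$ and the unimodular $x=-(1+2\beta)/T$, Lemma \ref{Pclass} (together with the characterization of $\mathcal{C}_\gamma(\alpha)$) produces an actual $f\in\mathcal{C}_{\gamma}(\alpha)$ with
$$
|a_3|-|a_2|=-\frac{t}{12}\Bigl(\frac{9}{T+1}+4\Bigr)<-\frac{t}{\sqrt{T+1}}.
$$
Hence for $T<5/4$ the stated lower bound cannot be proved as written; the correct sharp statement requires a case split on the position of the vertex, exactly as is done in Theorem \ref{thmg}, where the analogous vertex $3/(2-\lambda)$ is compared with $2/\sqrt{2-\lambda}$ and two different formulas result.

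For what it is worth, the paper's own proof has the identical defect: it asserts without verification that $c^2(T+1)-4-6c$ is increasing on $[2/\sqrt{1+T},\,2]$, which again is true only for $T\ge 5/4$. (In the spirallike Theorem \ref{a3a2} the corresponding vertex is $2/(T+1)\le 2/\sqrt{T+1}$ for every $T\ge 1$, so the monotonicity is genuine there; it is the coefficient $6$ in place of $4$ that breaks it here.) Your argument is therefore complete only under the extra hypothesis $T(\alpha,\gamma)\ge 5/4$; to finish you must either restrict to that range or evaluate the minimum at $c=3/(T+1)$ and restate the lower bound accordingly.
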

The following corollary immediately follows, by choosing $\gamma=0$, from Theorem \ref{convexa2a3} for the class of convex functions of order $\alpha$:
\begin{corollary}
Let $0\le \alpha <1$. For every $f\in \mathcal{C}(\alpha)$ of the form \eqref{S}, we have
$$
\cfrac{-(1-\alpha)}{\sqrt{1+T(\alpha,0)}}\le|a_3|-|a_2|\le \cfrac{(1-\alpha)}{3},
$$
where $T(\alpha,\gamma)$ is given by \eqref{eqT}. Equality holds on the right-hand side only for the rotations of the function $q_{0,\alpha}$ given by \eqref{qcequality} and on the left-hand side only for the rotations of $g_{0,\alpha}$ defined by \eqref{geq}.
\end{corollary}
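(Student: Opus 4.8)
The plan is to exploit the Alexander-type correspondence recorded just above the statement: $f\in\mathcal{C}_\gamma(\alpha)$ exactly when $F(z):=zf'(z)\in\mathcal{S}_\gamma(\alpha)$. If $f(z)=z+a_2z^2+a_3z^3+\cdots$ then $F(z)=z+2a_2z^2+3a_3z^3+\cdots$, so $a_2$ and $a_3$ are the second and third coefficients of $F$ divided by $2$ and by $3$. Feeding $F$ into Lemma~\ref{iffspirallike} gives $F(z)=z\exp\{(1-\alpha)\mu\int_0^z(p(t)-1)t^{-1}\,dt\}$ for some $p(z)=1+c_1z+c_2z^2+\cdots\in\mathcal{P}$; expanding the exponential to second order and writing $\beta:=(1-\alpha)\mu$ yields
\[
a_2=\frac{\beta}{2}\,c_1,\qquad a_3=\frac{\beta}{6}\bigl(c_2+\beta c_1^2\bigr).
\]
Since $\mathcal{C}_\gamma(\alpha)$ and the moduli $|a_2|,|a_3|$ are invariant under the rotations $f(z)\mapsto\bar\lambda f(\lambda z)$, $|\lambda|=1$, I may normalize $c_1=c\in[0,2]$. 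Substituting the Libera--Z\l{}otkiewicz parametrization $2c_2=c^2+(4-c^2)x$, $|x|\le1$, from Lemma~\ref{Pclass}, and writing $\rho:=|\beta|=(1-\alpha)\cos\gamma$, the problem becomes that of estimating
\[
|a_3|-|a_2|=\frac{\rho}{12}\Bigl(\bigl|(1+2\beta)c^2+(4-c^2)x\bigr|-6c\Bigr),\qquad c\in[0,2],\ |x|\le1 .
\]
A short computation carried out at the outset gives $|1+2\beta|^2=1+4(1-\alpha)(2-\alpha)\cos^2\gamma=T(\alpha,\gamma)^2$, which is where the constant \eqref{eqT} enters. (One must redo the optimization rather than quote Theorem~\ref{a3a2}, since passing through $zf'$ attaches the unequal weights $\tfrac12$ and $\tfrac13$ to $a_2$ and $a_3$.)

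For the right-hand inequality I would bound $\bigl|(1+2\beta)c^2+(4-c^2)x\bigr|\le Tc^2+(4-c^2)=(T-1)c^2+4$ by the triangle inequality, then maximize the quadratic $(T-1)c^2-6c+4$ on $[0,2]$; it is convex ($T>1$), so its maximum is at an endpoint, and since $T\le3$ the value $4$ at $c=0$ exceeds the value $4T-12$ at $c=2$. Hence $|a_3|-|a_2|\le\rho/3=(1-\alpha)\cos\gamma/3$. Equality forces $c=0$ and $|x|=1$, i.e.\ $p(z)=(1+\epsilon z^2)/(1-\epsilon z^2)$ with $|\epsilon|=1$; then $F$ is a rotation of $h_{\gamma,\alpha}$ of \eqref{Equality}, so $f(z)=\int_0^zF(t)t^{-1}\,dt$ is a rotation of $q_{\gamma,\alpha}$ of \eqref{qcequality}.

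For the left-hand inequality I would first minimize over $x$ with $c$ fixed: $(1+2\beta)c^2+(4-c^2)x$ runs over the closed disc of radius $4-c^2$ about the point $(1+2\beta)c^2$ of modulus $Tc^2$, so the least modulus is $\max\{0,(T+1)c^2-4\}$, attained at $x=-(1+2\beta)/T$ once $c\ge c_0:=2/\sqrt{T+1}$. There remains the one-variable minimization of
\[
\psi(c)=\frac{\rho}{12}\Bigl(\max\{0,(T+1)c^2-4\}-6c\Bigr)
\]
over $[0,2]$: $\psi$ is the decreasing line $-\rho c/2$ on $[0,c_0]$ and the quadratic $\tfrac{\rho}{12}\bigl((T+1)c^2-6c-4\bigr)$ on $[c_0,2]$, the two branches meeting at $c_0$ with common value $-\rho/\sqrt{1+T}$. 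I expect the main obstacle to be this minimization: one must decide which of the junction point $c_0$, the endpoints $c=0,2$, and the vertex $c=3/(T+1)$ of the quadratic branch gives the smallest value, and the needed comparisons clear, after removing denominators, to sign assertions about perfect squares of the type $(2\sqrt{T+1}-3)^2\ge0$; establishing that the minimum sits at $c=c_0$, with value $-(1-\alpha)\cos\gamma/\sqrt{1+T}$, is the key technical point.

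It remains to identify the extremal for the lower bound by tracing back the equality case $c=c_0$, $x=-(1+2\beta)/T$. There $a_3=0$ and $|a_2|=\rho/\sqrt{1+T}$, and since $|x|=1$ the value $c_3$ in Lemma~\ref{Pclass} is rigid, which forces $p$ to be a convex combination $\gamma_1\tfrac{1+\epsilon_1z}{1-\epsilon_1z}+\gamma_2\tfrac{1+\epsilon_2z}{1-\epsilon_2z}$ of two extreme points of $\mathcal{P}$; matching the first two coefficients of $p$ reproduces precisely the relations \eqref{eqe1e2}. Substituting this $p$ into the exponential formula for $F$ collapses it to $F(z)=z\,[(1-\epsilon_1z)^{\gamma_1}(1-\epsilon_2z)^{\gamma_2}]^{-2(1-\alpha)\mu}$, a rotation of $f_{\gamma,\alpha}$ of \eqref{eqf}, whence $f(z)=\int_0^zF(t)t^{-1}\,dt$ is a rotation of $g_{\gamma,\alpha}$ of \eqref{geq}, as asserted.
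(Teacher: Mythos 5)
Your reduction is set up correctly as far as it goes: the coefficient relations $2a_2=(1-\alpha)\mu c_1$, $6a_3=(1-\alpha)^2\mu^2c_1^2+(1-\alpha)\mu c_2$, the identity $|1+2\beta|=T(\alpha,\gamma)$, the exact value $\min_{|x|\le1}\big|(1+2\beta)c^2+(4-c^2)x\big|=\max\{0,(T+1)c^2-4\}$, and the entire upper-bound half with its equality analysis are all right. Your handling of the minimization over $x$ is in fact more careful than the paper's inequality \eqref{ge}, which is false for $c<2/\sqrt{T+1}$. (For the record, the paper proves nothing separately for this corollary; it merely sets $\gamma=0$ in Theorem \ref{convexa2a3}, whose proof you are essentially redoing.)

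The gap is exactly at the step you flag as ``the key technical point,'' and it cannot be closed: the assertion that the minimum of your $\psi(c)$ sits at $c_0=2/\sqrt{T+1}$ is false on part of the parameter range, and so is the stated lower bound itself. The vertex $c^{*}=3/(T+1)$ of the quadratic branch lies in $[c_0,2]$ precisely when $T\le 5/4$, and the perfect square you invoke works against you there: $\psi(c^{*})-\psi(c_0)=\frac{(1-\alpha)\cos\gamma}{12}\big(\tfrac{12}{\sqrt{T+1}}-\tfrac{9}{T+1}-4\big)=-\frac{(1-\alpha)\cos\gamma}{12}\cdot\frac{(2\sqrt{T+1}-3)^2}{T+1}\le 0$, with strict inequality for $T<5/4$. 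For $\gamma=0$ one has $T(\alpha,0)<5/4$ exactly when $\alpha\in(7/8,1)$, and a concrete counterexample to the corollary is $\alpha=9/10$ with $f'(z)=\big(1-\tfrac{15}{11}z+z^2\big)^{-1/10}$, which comes from $p(z)=(1-z^2)/(1-\tfrac{15}{11}z+z^2)\in\mathcal{P}$, i.e.\ $c_1=15/11=3/(T+1)$ and $x=-1$: this $f$ lies in $\mathcal{C}(9/10)$, has $a_2=3/44$ and $a_3=1/1320$, so $|a_3|-|a_2|=-89/1320\approx-0.0674242$, which is strictly below the claimed bound $-\tfrac{1}{10\sqrt{2.2}}\approx-0.0674200$; the asserted extremality of $g_{0,\alpha}$ fails there as well. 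The same defect sits in the paper's own Case 1 of the proof of Theorem \ref{convexa2a3} (the claimed monotonicity of $c^2(T+1)-4-6c$ on $[2/\sqrt{1+T},2]$ fails for $T<5/4$); note that Theorem \ref{thmg} handles the identical bifurcation correctly by giving a two-regime answer, while Theorem \ref{a3a2} escapes it only because its linear term is $-4c$, which puts the vertex at $2/(T+1)\le c_0$. The correct sharp lower bound for $\gamma=0$ and $\alpha\in(7/8,1)$ is $|a_3|-|a_2|\ge-\frac{1-\alpha}{12}\big(4+\frac{9}{T+1}\big)$, attained at $c=3/(T+1)$, $x=-1$.
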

We remark that Theorem \ref{convexa2a3}, for $\gamma=0$ and $\alpha=0$, is obtained by Li and Sugawa \cite{LS17} for the class of convex functions.

Let $\mathcal{LU}$ denote the subclass of $\mathcal{A}$ consisting of all {\em locally univalent functions};
namely, $\mathcal{LU}=\{f\in\mathcal{A}:f'(z)\ne0,z\in\mathbb{D}\}$. A family $\mathcal{G}(\lambda)$, $\lambda>0$, of functions $f\in \mathcal{LU}$ is defined by
$$
{\mathcal G}(\lambda)=\left\{f\in \mathcal{LU}:{\rm Re } \left ( 1+\frac{zf''(z)}{f'(z)}\right )<1+\cfrac{\lambda}{2}\right\}.
$$
The class $\mathcal{G}:=\mathcal{G}(1)$ was first introduced by Ozaki \cite{Ozaki41} and proved the inclusion relation $\mathcal{G}\subset \mathcal{S}$.
The Taylor coefficient problem for the class $\mathcal{G}(\lambda)$, $0<\lambda\leq 1$, is discussed in \cite{Obradovic13}. Recently, the radius of convexity of the functions in the class $\mathcal{G}(\lambda)$, $\lambda>0$, is obtained in \cite{Shankey20}. This class, with special choices of the parameter $\lambda$, has also been considered by many researchers in the literature for different purposes; see for instance \cite{AS18,ponnusamy95,ponnusamy96,ponnusamy07,Shankey20}.

The family $\mathcal{G}(\lambda)$ can be characterized in terms of the Carath\'{e}odory function as follows:
\begin{lemma}\label{giff}
For $0<\lambda \le1$, a function $f\in \mathcal{G}(\lambda)$ if and only if 
$$
f'(z)=\exp\bigg\{ \cfrac{\lambda}{2}\int_{0}^{z}\cfrac{p(t)-1}{t}\,dt\bigg\},
$$
where
\begin{equation}\label{gp(z)}
p(z)=\cfrac{1}{\lambda} \bigg(\lambda- \cfrac{2zf''(z)}{f'(z)}\bigg)\in \mathcal{P}.
\end{equation}
\end{lemma}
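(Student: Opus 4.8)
The plan is to prove both directions of the equivalence by rewriting the one-sided pointwise inequality defining $\mathcal{G}(\lambda)$ as the statement that an explicitly constructed function lies in $\mathcal{P}$, and then integrating a logarithmic-derivative identity; this mirrors exactly the passage from the definition of $\mathcal{S}_\gamma(\alpha)$ to the representation in Lemma~\ref{iffspirallike}.

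For the forward direction I would begin from $f\in\mathcal{G}(\lambda)$ and note that, since $f\in\mathcal{LU}$, the quotient $zf''(z)/f'(z)$ is analytic on $\mathbb{D}$ and vanishes at $z=0$; hence the function $p$ in \eqref{gp(z)} is analytic on $\mathbb{D}$ with $p(0)=1$. A rearrangement of $\mathrm{Re}\bigl(1+zf''(z)/f'(z)\bigr)<1+\lambda/2$ --- equivalently $\mathrm{Re}\,(zf''(z)/f'(z))<\lambda/2$, now divide by $\lambda/2>0$ and rearrange --- shows it is the same as $\mathrm{Re}\,p(z)>0$, so $p\in\mathcal{P}$. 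I would then solve \eqref{gp(z)} for the logarithmic derivative, $f''(z)/f'(z)=\tfrac{\lambda}{2}(1-p(z))/z$, note that $(1-p(t))/t$ is analytic at the origin and $\mathbb{D}$ is simply connected, and integrate from $0$ to $z$, using $f'(0)=a_1=1$ to fix the branch of the logarithm; exponentiating then yields the asserted representation of $f'$.

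For the converse I would take $p\in\mathcal{P}$, define $f$ by the stated exponential formula for $f'$ together with $f(0)=0$, and verify that $f$ is analytic on $\mathbb{D}$, that $f\in\mathcal{LU}$ since the exponential never vanishes, and that $f(0)=0$ and $f'(0)=1$, so that $f\in\mathcal{A}$. Differentiating the exponential returns $zf''(z)/f'(z)=\tfrac{\lambda}{2}(1-p(z))$, whence $\mathrm{Re}\bigl(1+zf''(z)/f'(z)\bigr)=1+\tfrac{\lambda}{2}\bigl(1-\mathrm{Re}\,p(z)\bigr)<1+\tfrac{\lambda}{2}$; thus $f\in\mathcal{G}(\lambda)$, and the same identity shows that the Carath\'eodory function attached to this $f$ via \eqref{gp(z)} is exactly the given $p$.

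I do not expect a real obstacle: the entire argument rests on recognizing that $\mathrm{Re}\,(zf''/f')<\lambda/2$ is a disguised half-plane condition. The points that merit a word of care are that $f\in\mathcal{LU}$ together with the simple connectedness of $\mathbb{D}$ makes $\log f'$ single-valued, that the normalization $a_1=1$ removes any extra additive constant so the integral representation holds exactly as stated, and that the hypothesis $0<\lambda\le1$ plays no role in the equivalence itself (it is what guarantees $\mathcal{G}(\lambda)\subset\mathcal{S}$, which is not needed here).
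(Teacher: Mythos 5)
Your argument follows the paper's proof exactly: rewrite $\mathrm{Re}\bigl(1+zf''/f'\bigr)<1+\lambda/2$ as $\mathrm{Re}\bigl(\lambda-2zf''/f'\bigr)>0$, observe that the resulting $p$ is analytic with $p(0)=1$, and integrate the logarithmic derivative on the simply connected disk with $f'(0)=1$ fixing the branch. You go further than the paper in one respect: the paper only writes out the forward implication, whereas you also verify the converse, which is needed for the ``if and only if.''

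One point needs attention. Your identity $f''(z)/f'(z)=\tfrac{\lambda}{2}\,(1-p(z))/z$ is correct, but integrating it gives
$f'(z)=\exp\bigl\{-\tfrac{\lambda}{2}\int_0^z (p(t)-1)t^{-1}\,dt\bigr\}$, which is the \emph{negative} of the exponent in the displayed formula of the lemma; so it does not ``yield the asserted representation'' as written. The sign error is in the statement itself (and in the paper's proof, which writes $\tfrac{p(z)-1}{2z}=\tfrac{f''(z)}{\lambda f'(z)}$, dropping a minus sign): testing on $H_\lambda(z)=\int_0^z(1-t^2)^{\lambda/2}dt$, one finds $p(z)=(1+z^2)/(1-z^2)\in\mathcal{P}$ and $H_\lambda'(z)=(1-z^2)^{\lambda/2}$, while the stated formula would produce $(1-z^2)^{-\lambda/2}$. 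Your converse computation, where you differentiate ``the exponential'' and obtain $zf''/f'=\tfrac{\lambda}{2}(1-p)$, is in fact differentiating the sign-corrected formula --- the literal statement would give $zf''/f'=\tfrac{\lambda}{2}(p-1)$, and then $\mathrm{Re}(1+zf''/f')<1+\lambda/2$ would require $\mathrm{Re}\,p<2$, which members of $\mathcal{P}$ need not satisfy. So your proof is internally consistent and correct, but it establishes the lemma with $-\lambda/2$ in the exponent; you should say so explicitly rather than claim agreement with the stated formula. (This also explains why the paper's coefficient relation $a_2=-\lambda c_1/4$ in \eqref{eqqga2a3} carries a minus sign that the stated exponential formula would not produce.)
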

\begin{proof}
Let $f\in \mathcal{G}(\lambda)$. Then from the definition of the class $\mathcal{G}(\lambda)$, we have
$$
{\rm Re } \left ( 1+\frac{zf''(z)}{f'(z)}\right )<1+\cfrac{\lambda}{2}
$$
and hence we may rewrite the above inequality as
$$
{\rm Re } \left ( \lambda-2\frac{zf''(z)}{f'(z)}\right )>0.
$$
Consider the function
$$
p(z)=\cfrac{1}{\lambda}\bigg(\lambda-\cfrac{2zf''(z)}{f'(z)}\bigg).
$$
Observe that $p(0)=1,\,{\rm Re}(p(z))>0$, and hence $p(z)$ is clearly a Carath\'{e}odory function. The function $p$ simplifies to 
$$
\cfrac{p(z)-1}{2z}=\cfrac{f''(z)}{\lambda f'(z)}.
$$
Usual integration from $0$ to $z$ along any path leads to
$$
\lambda\int_{0}^{z}\cfrac{p(t)-1}{2t}\,dt=\int_{0}^{z}\cfrac{f''(t)}{ f'(t)}\,dt=\log(f'(z))
$$
or equivalently, it finally brings to the form
$$
f'(z)=\exp\bigg\{ \cfrac{\lambda}{2}\int_{0}^{z}\cfrac{p(t)-1}{t}\,dt\bigg\}.
$$
This completes the proof of Lemma \ref{giff}.
\end{proof}

Our next results are related to the sharp bounds for $|a_2|-|a_1|$ and $|a_3|-|a_2|$ when the functions $f$ are belonging to the class $\mathcal{G}(\lambda)$.
\begin{theorem}\label{ga2a3}
Let $0<\lambda\le 1$. If $f\in \mathcal{G}(\lambda)$ given by \eqref{S}, then
$$
-1\le|a_2|-|a_1|\le\cfrac{\lambda}{2}-1.
$$
Equality holds on the right-hand side for the rotations of 
\begin{equation}\label{Geq}
G_{\lambda}(z)=\cfrac{(1+z)^{1+\lambda}-1}{\lambda+1}.
\end{equation}
Equality holds on the left-hand side for the rotations of 
\begin{equation}\label{Heq}
H_{\lambda}(z)=\int_{0}^{z}(1-t^2)^{\lambda/2}dt.
\end{equation}
\end{theorem}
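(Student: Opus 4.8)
The plan is to use Lemma~\ref{giff} to replace $f\in\mathcal{G}(\lambda)$ by its associated Carath\'eodory function $p$, read off $a_2$ in terms of the linear Taylor coefficient $c_1$ of $p$, and then invoke the classical estimate $|c_1|\le 2$.

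First I would write, via Lemma~\ref{giff},
$$
f'(z)=\exp\bigg\{\frac{\lambda}{2}\int_0^z\frac{p(t)-1}{t}\,dt\bigg\},\qquad p(z)=1+c_1z+c_2z^2+\cdots\in\mathcal{P}.
$$
Since $(p(t)-1)/t=c_1+c_2t+c_3t^2+\cdots$, integrating term by term, exponentiating, and comparing the result with $f'(z)=1+2a_2z+3a_3z^2+\cdots$ gives $a_2=\frac{\lambda}{4}c_1$ (and $a_3=\frac{\lambda}{12}c_2+\frac{\lambda^2}{24}c_1^2$, should the companion estimate for $|a_3|-|a_2|$ need it; here only $a_2$ matters). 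As $a_1=1$ and
$$
0\le|a_2|=\frac{\lambda}{4}|c_1|\le\frac{\lambda}{2},
$$
the last step being the bound $|c_1|\le2$, we obtain at once $-1\le|a_2|-|a_1|\le\frac{\lambda}{2}-1$, which is the asserted two-sided estimate. (The left inequality is trivial and in fact valid for every $f\in\mathcal{A}$.)

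For sharpness, equality on the right forces $|c_1|=2$, and the equality case of $|c_1|\le 2$ (the Herglotz measure of $p$ collapses to a unit point mass) then forces $p(z)=(1+\eta z)/(1-\eta z)$ for some unimodular $\eta$; substituting this back into the representation identifies $f$ as a rotation of $G_\lambda$. One checks directly that $G_\lambda\in\mathcal{G}(\lambda)$ --- because $1+zG_\lambda''(z)/G_\lambda'(z)=1+\lambda z/(1+z)$ maps $\mathbb{D}$ into $\{\,{\rm Re}\,w<1+\lambda/2\,\}$ --- and that $G_\lambda'(z)=(1+z)^{\lambda}$ gives $a_2=\lambda/2$, whence $|a_2|-|a_1|=\lambda/2-1$. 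For the left inequality, equality only requires $a_2=0$, which does not determine $f$; I would simply exhibit $H_\lambda$, checking that $H_\lambda'(z)=(1-z^2)^{\lambda/2}$ has no linear term (so $a_2=0$ and $|a_2|-|a_1|=-1$) and that $1+zH_\lambda''(z)/H_\lambda'(z)=1-\lambda z^2/(1-z^2)$ has real part less than $1+\lambda/2$, so $H_\lambda\in\mathcal{G}(\lambda)$.

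No step here is genuinely hard: once Lemma~\ref{giff} is in hand the coefficient bound is a one-line computation. The only mild subtleties are in the sharpness discussion --- using the equality case of $|c_1|\le 2$ to isolate the right-hand extremal --- and in the (routine but necessary) checks that $G_\lambda$ and $H_\lambda$ actually lie in $\mathcal{G}(\lambda)$.
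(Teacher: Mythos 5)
Your proof is correct and follows essentially the same route as the paper: express $a_2$ in terms of the first Carath\'eodory coefficient $c_1$ (the paper obtains $a_2=-\lambda c_1/4$ directly from $\lambda f'p=\lambda f'-2zf''$, you get $+\lambda c_1/4$ from the exponential form of Lemma~\ref{giff}; the sign is immaterial for the moduli), apply $|c_1|\le 2$, and verify directly that $G_\lambda$ and $H_\lambda$ lie in $\mathcal{G}(\lambda)$ with $a_2=\lambda/2$ and $a_2=0$ respectively. Since the theorem only asserts that equality holds for these functions (not that they are the only extremals), your direct verifications already establish sharpness, and the extra equality-case discussion is not needed.
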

\begin{theorem}\label{thmg}
For $0\le \lambda <1$, let every function $f\in \mathcal{G}(\lambda)$ be defined by \eqref{S}. Then we have
$$
|a_3|-|a_2|\le \cfrac{\lambda}{6}.
$$
The inequality becomes equality only for the rotations of $H_{\lambda}$ defined by $\eqref{Heq}$. Furthermore,
$$
|a_3|-|a_2|\ge\left \{
\begin{array}{ll}
\cfrac{\lambda(4\lambda-17)}{24(2-\lambda)}, & {\mbox{ for }} 0<\lambda\le 1/2,
\\[5mm]
-\cfrac{\lambda(\lambda+2)}{6}, & {\mbox{ for }} 1/2\le\lambda\le1.
\end{array}
\right.
$$
 The inequality becomes equality only for the rotations of
\begin{equation*}
F_{\lambda}(z)=\int_{0}^{z}[(1-\epsilon_1t)^{\gamma_1}(1-\epsilon_2t)^{\gamma_2})]^\lambda,
\end{equation*}
where $\gamma_1,\gamma_2,\epsilon_1,\epsilon_2$ satisfy \eqref{eqe1e2} with $x=-1$, $c=3/(2-\lambda)$ for  $0<\lambda\le 1/2$, and $c=2$ for $1/2\le\lambda\le1$.
\end{theorem}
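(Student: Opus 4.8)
The plan is to reduce the problem, via Lemmas~\ref{giff} and~\ref{Pclass}, to an elementary extremal problem over the coefficients $c_1,c_2$ of the associated Carath\'eodory function, and then optimize. I would first expand the representation in Lemma~\ref{giff}: since $\int_0^z\frac{p(t)-1}{t}\,dt = c_1z+\tfrac{c_2}{2}z^2+\cdots$, exponentiating and matching with $f'(z)=1+2a_2z+3a_3z^2+\cdots$ gives $a_2=-\tfrac{\lambda c_1}{4}$ and $a_3=\tfrac{\lambda}{24}(\lambda c_1^2-2c_2)$. Using that $\mathcal{G}(\lambda)$ and the moduli $|a_n|$ are invariant under $f(z)\mapsto e^{-i\theta}f(e^{i\theta}z)$ (which sends $c_1\mapsto c_1e^{i\theta}$), I may assume $c_1=c\in[0,2]$; then $|a_2|=\tfrac{\lambda c}{4}$, and substituting $2c_2=c^2+(4-c^2)x$, $|x|\le1$, from Lemma~\ref{Pclass} yields $a_3=\tfrac{\lambda}{24}\big((\lambda-1)c^2-(4-c^2)x\big)$.

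For the upper bound, the triangle inequality together with $0<\lambda\le1$ gives $|a_3|\le\tfrac{\lambda}{24}\big((1-\lambda)c^2+(4-c^2)\big)=\tfrac{\lambda}{24}(4-\lambda c^2)$, hence $|a_3|-|a_2|\le\tfrac{\lambda}{24}(4-\lambda c^2-6c)$; the right-hand side is decreasing in $c$ on $[0,2]$, so it is maximal at $c=0$, which gives the bound $\lambda/6$. Equality then forces $c=0$ and $|x|=1$, i.e. $p(z)=(1+\eta z^2)/(1-\eta z^2)$ with $|\eta|=1$ and $f'(z)=(1-\eta z^2)^{\lambda/2}$, a rotation of $H_\lambda$; and $H_\lambda\in\mathcal{G}(\lambda)$ is checked directly.

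The lower bound is the heart of the matter: I need an upper bound for $|a_2|-|a_3|=\tfrac{\lambda c}{4}-|a_3|$, which forces me to bound $|a_3|$ \emph{from below} over $|x|\le1$. For fixed $c$, $a_3$ fills the disk centered at the negative real number $\tfrac{\lambda}{24}(\lambda-1)c^2$ of radius $\tfrac{\lambda}{24}(4-c^2)$, so $\min_{|x|\le1}|a_3|=0$ when $(1-\lambda)c^2\le4-c^2$, i.e. $c\le 2/\sqrt{2-\lambda}$, and otherwise $\min_{|x|\le1}|a_3|=\tfrac{\lambda}{24}\big((2-\lambda)c^2-4\big)$, attained at $x=-1$. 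Consequently $|a_2|-|a_3|\le\tfrac{\lambda c}{4}$ on $[0,2/\sqrt{2-\lambda}]$ and $|a_2|-|a_3|\le\tfrac{\lambda}{24}\,\phi(c)$ on $[2/\sqrt{2-\lambda},2]$, where $\phi(c)=6c-(2-\lambda)c^2+4$; the two bounds coincide at the common endpoint and the first branch is increasing, so it suffices to maximize $\phi$ over $[2/\sqrt{2-\lambda},2]$. Its vertex is at $c=3/(2-\lambda)$, which lies in this interval iff $\lambda\le1/2$: for $0<\lambda\le1/2$ the maximum is $\phi\!\big(3/(2-\lambda)\big)=(17-4\lambda)/(2-\lambda)$, giving $|a_3|-|a_2|\ge\tfrac{\lambda(4\lambda-17)}{24(2-\lambda)}$; for $1/2\le\lambda\le1$ the function is increasing and the maximum is $\phi(2)=8+4\lambda$, giving $|a_3|-|a_2|\ge-\tfrac{\lambda(\lambda+2)}{6}$.

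For sharpness, equality throughout forces $x=-1$ together with $c=3/(2-\lambda)$ (resp. $c=2$). Choosing $p$ to be the extremal Carath\'eodory function $\gamma_1\frac{1+\epsilon_1z}{1-\epsilon_1z}+\gamma_2\frac{1+\epsilon_2z}{1-\epsilon_2z}$ whose first two nontrivial coefficients realize these values of $c_1,c_2$ --- equivalently $\gamma_1,\gamma_2,\epsilon_1,\epsilon_2$ satisfying \eqref{eqe1e2} with $x=-1$ and the indicated $c$ --- Lemma~\ref{giff} integrates to $f'(z)=[(1-\epsilon_1z)^{\gamma_1}(1-\epsilon_2z)^{\gamma_2}]^{\lambda}$, so $f=F_\lambda$ up to rotation, and $F_\lambda\in\mathcal{G}(\lambda)$ is again verified directly. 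I expect the main obstacle to be the nested optimization in the lower bound: the inner step (minimizing $|a_3|$ over $x$) requires the geometric case split at $c=2/\sqrt{2-\lambda}$, and in the outer step the maximizing $c$ migrates from an interior critical point to the endpoint $c=2$ as $\lambda$ passes through $1/2$ --- this is precisely what manufactures the piecewise form of the answer. The remaining work is bookkeeping: confirming the branch $c\le 2/\sqrt{2-\lambda}$ never dominates, and checking that the equality configurations are admissible (in particular $\epsilon_1\ne\epsilon_2$ when $c<2$, the case $c=2$ degenerating to a rotation of $G_\lambda$).
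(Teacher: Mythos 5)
Your argument is correct and follows essentially the same route as the paper: the coefficient identities $a_2=-\lambda c_1/4$ and $a_3=(\lambda^2c_1^2-2\lambda c_2)/24$, rotation to $c_1=c\in[0,2]$, the Libera--Z\l{}otkiewicz parametrization of $c_2$, and the same outer optimization in $c$ with the case split at $\lambda=1/2$. If anything, your inner minimization is the more careful one: you compute $\min_{|x|\le1}|a_3|=\max\bigl(0,\tfrac{\lambda}{24}((2-\lambda)c^2-4)\bigr)$ and check that the branch $c\le 2/\sqrt{2-\lambda}$ never dominates, whereas the paper's corresponding step relies on the pointwise inequality $|(1-\lambda)c^2-r(4-c^2)|\ge|(1-\lambda)c^2-(4-c^2)|$, which fails for intermediate $r$ (the final minimum is nevertheless the same).
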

\section{Proof of the main results}\label{proofs}
This section is devoted to the detailed discussion on our proof of our main results.
\subsection{Proof of Theorem \ref{a2a1}}
	Let $f(z)=z+\sum_{n=2}^{\infty}a_nz^n\in \mathcal{S}_{\gamma}(\alpha)$. Then by the definition, we may consider  $p(z)=1+c_1z+c_2z^2+\cdots\in \mathcal{P}$ of the form \eqref{p(z)} which is equivalent to writing 
	$$
	((1-\alpha)p(z)+\alpha)\cos \gamma-i \sin \gamma=e^{-i\gamma}\cfrac{zf'(z)}{f(z)}.
	$$
	By using the Taylor representations of the functions $f$ and $p$, and comparing the coefficients of $z^n\, (n=1,2)$ both the sides, we get
	\begin{equation}\label{a2a3}
	a_2=(1-\alpha)\mu c_1 \text{ and } 2a_3=(1-\alpha)^2\mu^2 c_1^2+(1-\alpha)\mu c_2.
	\end{equation}
	So,
	$$
	|a_2|-|a_1|=|a_2|-1=(1-\alpha) \cos\gamma |c_1|-1
	\le 2(1-\alpha) \cos\gamma-1,\\
	$$
	where the last inequality comes by using $|c_n|\le 2$ for $n\ge 1$. 
	For the equality, let us consider the function $f=	k_{\gamma,\alpha}$ given by \eqref{keq} for which $a_2=2(1-\alpha)\mu$.
Then it is a simple exercise to see that
	$$
	{\rm Re}\bigg(e^{-i\gamma}\cfrac{zk'_{\gamma,\alpha}(z)}{k_{\gamma,\alpha}(z)}\bigg)=\cos \gamma\bigg[1+2(1-\alpha){\rm Re}\bigg(\cfrac{z}{1-z}\bigg)\bigg]>\alpha\cos\gamma,
	$$
	from which we can easily conclude that $k_{\gamma,\alpha}\in\mathcal{S}_{\gamma}(\alpha)$ and $|a_2|-|a_1|=2(1-\alpha)\cos\gamma-1$.
	
	On the other hand, 
	$$
	|a_1|-|a_2|=1-(1-\alpha)\cos \gamma|c_1|\le 1.
	$$
	Consider the function $f=h_{\gamma,\alpha}$ given by \eqref{Equality}. In this case $a_1=1$ and $a_2=0$.
A simple calculation shows that
	$$
	{\rm Re}\bigg(e^{-i\gamma}\cfrac{zh'_{\gamma,\alpha}(z)}{h_{\gamma,\alpha}(z)}\bigg)=\cos \gamma\bigg[1+2(1-\alpha){\rm Re}\bigg(\cfrac{z^2}{1-z^2}\bigg)\bigg]>\alpha\cos\gamma.
	$$
	Thus, the left-hand side equality holds for the function $f=h_{\gamma,\alpha}\in \mathcal{S}_{\gamma}(\alpha)$.
	This completes the proof.\hfill{$\Box$}
\subsection{Proof of Theorem \ref{a3a2}} Let $f\in \mathcal{S}_{\gamma}(\alpha)$. Then from equation \eqref{a2a3} we get
	\begin{align*}
	|a_3|-|a_2|&=\bigg|\cfrac{(1-\alpha)^2\mu^2 c_1^2+(1-\alpha)\mu c_2}{2}\bigg|-|(1-\alpha)\mu c_1|\\
	&=\cfrac{(1-\alpha)|\mu|}{2}\bigg[\big|{(1-\alpha)\mu c_1^2+c_2}\big|-2| c_1|\bigg].
		\end{align*}
	As $|a_3|-|a_2|$ is invariant under rotation, to simplify the calculation we assume that $c_1=c\in [0,2]$. Therefore, by Lemma \ref{Pclass}, for some $x\in \overline{\mathbb{D}}$ we have
	\begin{equation}\label{eqa3a2x}
	|a_3|-|a_2|=\cfrac{(1-\alpha)|\mu|}{4}\bigg[| c^2+(4-c^2)x+2(1-\alpha)\mu c^2|-4 c\bigg]=\cfrac{(1-\alpha)|\mu|}{4}\bigg[\psi(x,c)-4 c\bigg]
	\end{equation}
	with $\psi(x,c):=| c^2+(4-c^2)x+2(1-\alpha)\mu c^2|$. By letting $x=re^{i\theta}$, we compute
	\begin{align}\label{equality}
\psi(x,c)&=\big| c^2+(4-c^2)r(\cos\theta+i \sin \theta)+2(1-\alpha)\cos\gamma(\cos \gamma+i\sin \gamma) c^2\big|\nonumber\\
	&=\Big(\sqrt{P\cos\theta+Q\sin \theta+R}-4c\Big)
	\end{align}
		where $P=2r(4-c^2)c^2(1+2(1-\alpha)\cos^2\gamma),\,Q=2r(1-\alpha)(4-c^2)c^2\sin 2\gamma$, and $R=c^4+(4-c^2)^2r^2+4c^4(1-\alpha)^2\cos^2\gamma+4c^4(1-\alpha)\cos^2\gamma$. Clearly, $\sqrt{P^2+Q^2}=2rc^2(4-c^2)T(\alpha,\gamma)$ with $T(\alpha, \gamma)=\sqrt{1+4(1-\alpha)(2-\alpha)\cos^2\gamma}$. Next, we use the following well-known inequality
	\begin{equation}\label{PQR}
	-	\sqrt{{P^2+Q^2}}\le{P\cos\theta+Q\sin \theta}\le 	\sqrt{P^2+Q^2}
	\end{equation} 
	 to obtain 
	\begin{align*}
	\psi(x,c)&\le \sqrt{\sqrt{P^2+Q^2}+R} \\	&=\sqrt{2rc^2(4-c^2)T(\alpha,\gamma)+(4-c^2)^2r^2+c^4T^2(\alpha,\gamma)}\\
	&=c^2T(\alpha,\gamma)+(4-c^2)r\le c^2T(\alpha,\gamma)+4-c^2.
	\end{align*}
		 Since $\gamma\in (-\pi/2,\pi/2)$ and $\alpha\in [0,1)$, it is easy to check that $T(\alpha,\gamma)\le 3$. Hence,
\begin{equation}\label{bound}
	\psi(x,c)\le 2c^2+4.
\end{equation}
By substituting \eqref{bound} into \eqref{eqa3a2x}, we get	
$$
	|a_3|-|a_2|\le \cfrac{(1-\alpha)|\mu|}{4}(2c^2+4-4c)\le \cos\gamma (1-\alpha),
	$$
as required. Recall that, in the previous theorem we have already proved that $h_{\gamma,\alpha}$ defined by \eqref{Equality} belongs to $\mathcal{S}_{\gamma}(\alpha)$. It is evident that the equality holds for the function $h_{\gamma,\alpha}$ in which the coefficient of $z^2$ is $0$ and $z^3$ is $(1-\alpha)\mu$. Thus, the right-hand equality of the theorem has been proved.

	On the other hand, by \eqref{equality} and \eqref{PQR} we have
	\begin{align}
	\psi(x,c)&\ge \sqrt{-\sqrt{P^2+Q^2}+R}\nonumber \\	&=\sqrt{-2rc^2(4-c^2)T(\alpha,\gamma)+(4-c^2)^2r^2+c^4T^2(\alpha,\gamma)}\nonumber\\
	&=|c^2T(\alpha,\gamma)-(4-c^2)r|\ge |c^2T(\alpha,\gamma)-(4-c^2)|.\label{ge}
	\end{align}
Here, the first equality occurs if $\cos \theta=-(1+2(1-\alpha)\cos^2\gamma)/T(\alpha,\gamma)$ and $\sin \theta=(1-\alpha)\sin(2\gamma)/T(\alpha,\gamma)$. Also, the last equality holds when $r=1$. The equation \eqref{eqa3a2x} and the inequality \eqref{ge} together lead to
\begin{equation}\label{eqt}
|a_3|-|a_2|\ge\cfrac{(1-\alpha)|\mu|}{4}\Big(|c^2T(\alpha,\gamma)-(4-c^2)|-4c\Big).
\end{equation}
We may have the following two cases:\\

	\medskip\noindent
	{\bf Case 1:} Let $c^2(T(\alpha,\gamma)+1)-4\ge 0$. Then \eqref{eqt} becomes
	$$
	|a_3|-|a_2|\ge \cfrac{(1-\alpha)|\mu|}{4}\Big(c^2(T(\alpha,\gamma)+1)-4-4c\Big).
	$$
	It is easy to check that $c^2(T(\alpha,\gamma)+1)-4-4c$ is an increasing function of $c$ in the interval $[2/\sqrt{1+T(\alpha,\gamma)}, 2]$.
	
	\medskip\noindent
	{\bf Case 2:} Let $c^2(T(\alpha,\gamma)+1)-4\le 0$. Then
	$$
	|a_3|-|a_2|\ge \cfrac{(1-\alpha)|\mu|}{4}\Big(-c^2(T(\alpha,\gamma)+1)+4-4c\Big).
	$$
	Since $T(\alpha,\gamma)\ge 1$, we know  $-c^2(T(\alpha,\gamma)+1)+4-4c$ is a decreasing function of $c$ in the interval $[0,2/\sqrt{1+T(\alpha,\gamma)}]$.
	
	\medskip\noindent
	Hence, the minimum attains in \eqref{eqt} for $c=2/\sqrt{1+T(\alpha,\gamma)}$. It follows that
	$$
	|a_3|-|a_2|\ge  \cfrac{-2(1-\alpha)\cos\gamma}{\sqrt{1+T(\alpha,\gamma)}}.
	$$
	
	We now proceed to prove the left-hand side equality part. Choose $$
	x=-\frac{1+2(1-\alpha)\cos^2\gamma+i(1-\alpha)\sin(2\gamma)}{T(\alpha,\gamma)} \mbox{ and } c=\frac{2}{\sqrt{1+T(\alpha,\gamma)}}.
	$$
	Note that $4-c^2\ge0$ and $|x|=1$. Thus by making use of the Carath\'{e}odory-Toeplitz theorem (see \cite{GS58},\cite{Tsuji}), we obtain $p$ is of the following form
	\begin{equation}\label{p(z)form}
	p(z)=\gamma_1\cfrac{1+\epsilon_1z}{1-\epsilon_1z}+\gamma_2\cfrac{1+\epsilon_2z}{1-\epsilon_2z}\in \mathcal{P},
	\end{equation}
	where $\gamma_1,\gamma_2,\epsilon_1,\epsilon_2$ satisfy \eqref{eqe1e2}.

Now Lemma \ref{iffspirallike} gives
	$$
	f_{\gamma,\alpha}(z)=\cfrac{z}{(1-\epsilon_1z)^{2(1-\alpha)\mu\gamma_1}(1-\epsilon_2z)^{2(1-\alpha)\mu\gamma_2}},
	$$
which belongs to $\mathcal{S}_{\gamma}(\alpha)$. This can indeed be verified by using the series expansion of $f_{\gamma,\alpha}$ to get
$$
	a_2=\cfrac{2(1-\alpha)\mu}{\sqrt{1+T(\alpha,\gamma)}}  \text{ and }a_3=\cfrac{(1-\alpha)\mu}{1+T(\alpha,\gamma)}\bigg[2(1-\alpha)\mu +1+xT(\alpha,\gamma)\bigg].
	$$
	This gives that
	$$
	|a_3|-|a_2|=\cfrac{-2(1-\alpha)\cos\gamma}{\sqrt{1+T(\alpha,\gamma)}},
	$$
	which yields the desired result.
\hfill{$\Box$}
\subsection{Proof of Theorem \ref{convexa1a2}}
	Suppose $f\in \mathcal{C}_{\gamma}(\alpha)$. Then there exist a function $p\in \mathcal{P}$ such that
$$
p(z)=\cfrac{1}{1-\alpha}\bigg\{ \cfrac{1}{\cos \gamma}\bigg[e^{-i\gamma} \bigg(1+\cfrac{zf''(z)}{f'(z)}\bigg)+i\sin \gamma\bigg]-\alpha\bigg\},
$$
or equivalently
\begin{equation}\label{eqcp(z)}
((1-\alpha)p(z)+\alpha)\cos \gamma-i \sin \gamma=e^{-i\gamma} \bigg(1+\cfrac{zf''(z)}{f'(z)}\bigg).
\end{equation}
We write 
$$
f(z)=\sum_{n=1}^{\infty}a_nz^n \mbox{ and }p(z)=1+\sum_{n=1}^{\infty}p_nz^n.
$$
Equating the coefficients of $z^n$ on both the sides of the equation \eqref{eqcp(z)} for $n=1,2$, we obtain
\begin{equation}\label{eqa2a3}
a_1=1,\,2a_2=(1-\alpha)\mu c_1 \mbox{  and  } 6a_3=(1-\alpha)^2\mu^2 c_1^2+(1-\alpha)\mu c_2.
\end{equation}
Now we compute and estimate
$$
|a_2|-|a_1|=\bigg|\cfrac{(1-\alpha)\mu c_1}{2}\bigg|-1\le(1-\alpha)\cos \gamma-1.
$$
The last inequality holds since $|c_1|\le 2$. It is easy to check that the equality holds for the function $l_{\gamma,\alpha}$ is given by \eqref{leq} and which satisfies $zl'_{\gamma,\alpha}=k_{\gamma,\alpha}$, where $k_{\gamma,\alpha}\in \mathcal{S}_{\gamma}(\alpha)$ is given by \eqref{keq}. Thus $l_{\gamma,\alpha}\in\mathcal{C}_{\gamma}(\alpha)$ and the coefficient of $z^2$ in $l_{\gamma,\alpha}$ is $(1-\alpha)\mu$.

 Secondly, we estimate the lower bound for $|a_1|-|a_2|=1-(1-\alpha)|\mu c_1|/2\le 1$. For the sharpness, let us consider the function $q_{\gamma,\alpha}$ satisfying $zq'_{\gamma,\alpha}=h_{\gamma,\alpha}$, where $h_{\gamma,\alpha}$ is defined by \eqref{Equality}. Since $h_{\gamma,\alpha}\in \mathcal{S}_{\gamma}(\alpha)$, it concludes that $q_{\gamma,\alpha}\in \mathcal{C}_{\gamma}(\alpha)$. Also,
 \begin{align*}
 q_{\gamma,\alpha}(z)&=\int_{0}^{z}\cfrac{1}{(1-t^2)^{(1-\alpha)\mu}}\,dt=\int_{0}^{z}\bigg[1+(1-\alpha)\mu t^2+\cfrac{(1-\alpha)\mu((1-\alpha)\mu+1)}{2}\,t^4+\cdots\bigg]\\
 &=z+\cfrac{(1-\alpha)\mu}{3}\,z^3+\cfrac{(1-\alpha)\mu((1-\alpha)\mu+1)}{10}\,z^5+\cdots.
 \end{align*}
 For this function, we have
 \begin{equation}\label{gcoeff}
 a_1=1,\,a_2=0,\mbox{ and } a_3=\cfrac{(1-\alpha)\mu}{3},
 \end{equation}
 and hence $|a_2|-|a_1|=-1$. This completes the proof.
\hfill{$\Box$}

\subsection{Proof of Theorem \ref{convexa2a3}}To prove this theorem, we use the similar technique that is adopted in Theorem \ref{a3a2}. Let $f\in \mathcal{C}_{\gamma}(\alpha)$. Then by means of equation \eqref{eqa2a3}, we see that
\begin{align*}
|a_3|-|a_2|&=\bigg|\cfrac{(1-\alpha)^2\mu^2 c_1^2+(1-\alpha)\mu c_2}{6}\bigg|-\bigg|\cfrac{(1-\alpha)\mu c_1}{2}\bigg|\\
&=\cfrac{(1-\alpha)|\mu|}{6}\big[|\mu(1-\alpha)c_1^2+c_2|-3|c_1|\big].
\end{align*}
	As $|a_3|-|a_2|$ is invariant under rotations, to simplify the calculation we assume that $c_1=c\in [0,2]$. Thus, Lemma \ref{Pclass} yields
\begin{equation}\label{ca3a2x}
|a_3|-|a_2|=\cfrac{(1-\alpha)|\mu|}{12}\big[\psi(x,c)-6c\big]
\end{equation}
for some $x\in\overline{\mathbb{D}}$. By using equation \eqref{bound} we derive the desired inequality
$$
|a_3|-|a_2|\le\cfrac{(1-\alpha)|\mu|}{12}\Big(2c^2+4-6c\Big)\le \cfrac{(1-\alpha)\cos \gamma}{3}.
$$
It is clear from \eqref{gcoeff} that the equality holds for the function $q_{\gamma,\alpha}\in\mathcal{C}_{\gamma}(\alpha)$ given by \eqref{qcequality}.

We next find the lower bound of $|a_3|-|a_2|$. The inequality \eqref{ge} and equation \eqref{ca3a2x} together lead to 
\begin{equation}\label{ceqa3a2}
|a_3|-|a_2|\ge\cfrac{(1-\alpha)|\mu|}{12}\Big(|c^2T(\alpha,\gamma)-(4-c^2)|-6c\Big).
\end{equation}
Next we consider the following two cases in order to complete the proof:

\medskip\noindent
{\bf Case 1:} Let $c^2(T(\alpha,\gamma)+1)-4\ge 0$. Then
$$
|a_3|-|a_2|\ge \cfrac{(1-\alpha)|\mu|}{12}\Big(c^2(T(\alpha,\gamma)+1)-4-6c\Big).
$$
It is easy to check that $c^2(T(\alpha,\gamma)+1)-4-6c$ is an increasing function of $c$ in the interval $[2/\sqrt{1+T(\alpha,\gamma)}, 2]$.

\medskip\noindent
{\bf Case 2:} Let $c^2(T(\alpha,\gamma)+1)-4\le 0$. Then
$$
|a_3|-|a_2|\ge \cfrac{(1-\alpha)|\mu|}{12}\Big(-c^2(T(\alpha,\gamma)+1)+4-6c\Big).
$$
Since $T(\alpha,\gamma)\ge 1$, we know  $-c^2(T(\alpha,\gamma)+1)+4-6c$ is a decreasing function of $c$ in the interval $[0,2/\sqrt{1+T(\alpha,\gamma)}]$.

\medskip\noindent
Therefore, the minimum in \eqref{ceqa3a2} is attained at $c=2/\sqrt{1+T(\alpha,\gamma)}$, which implies that
	$$
|a_3|-|a_2|\ge  \cfrac{-(1-\alpha)\cos\gamma}{\sqrt{1+T(\alpha,\gamma)}}.
$$
For the equality, we consider the function $g_{\gamma,\alpha}$ defined as $zg'_{\gamma,\alpha}=f_{\gamma,\alpha}$, where $f_{\gamma,\alpha}\in \mathcal{S}_{\gamma}(\alpha)$ is given by \eqref{eqf}. Thus, $g_{\gamma,\alpha}\in \mathcal{C}_{\gamma}(\alpha)$ with the representation \eqref{geq}. The series expansion of $g_{\gamma,\alpha}$ has the form
\begin{equation*}
g_{\gamma,\alpha}(z)=z+\cfrac{\mu(1-\alpha)}{\sqrt{1+T(\alpha,\gamma)}}\,z^2+\cfrac{(1-\alpha)\mu[2(1-\alpha)\mu+1+T(\alpha,\gamma)x]}{3(1+T(\alpha,\gamma))}\,z^3+\cdots,
\end{equation*}
completing the proof.
\hfill{$\Box$}	
\subsection{Proof of Theorem \ref{ga2a3}} Let $f\in \mathcal{G}(\lambda)$. Then there exists a function $p(z)=1+c_1z+c_2z^2+\cdots\in\mathcal{P}$ satisfying \eqref{gp(z)}. Therefore, it is equivalent to write
$$
\lambda f'(z)p(z)=\lambda f'(z)-2zf''(z).
$$
After writing  $f$ and $p$ in the series form and by comparing the coefficients of $z$ and $z^2$ in the above equation, we obtain the relations
\begin{equation}\label{eqqga2a3}
a_2=-\cfrac{\lambda c_1}{4} \mbox{ and } a_3=\cfrac{\lambda^2 c_1^2-2\lambda c_2}{24}.
\end{equation}
Consider
$$
|a_2|-|a_1|=\cfrac{\lambda|c_1|}{4}-1\le\cfrac{\lambda}{2}-1. $$
To prove the equality part, consider the function $G_{\lambda}$ provided by \eqref{Geq} for which $a_2=\lambda/2$.
An easy computation yields
$$
{\rm Re } \left ( 1+\frac{zG_{\lambda}''(z)}{G_{\lambda}'(z)}\right )=1+\lambda{\rm Re } \left ( \frac{z}{1+z}\right )<1+\cfrac{\lambda}{2},
$$
which shows that $G_{\lambda} \in \mathcal{G}(\lambda)$.

Next, $|a_1|-|a_2|=1-\lambda|c_1|/4\le 1$. As the function $H_{\lambda}$, defined by \eqref{Heq}, satisfies 
$$
{\rm Re } \left ( 1+\frac{zH_{\lambda}''(z)}{H_{\lambda}'(z)}\right )=1-\lambda{\rm Re } \left ( \frac{z^2}{1-z^2}\right )<1+\cfrac{\lambda}{2}
$$
and
\begin{equation}\label{eqHa3a2}
a_1=1,\,a_2=0, \mbox{ and } a_3=\cfrac{\lambda}{6}\,,
\end{equation}
 it is clear that the left-hand equality holds for $H_{\lambda}\in\mathcal{G}(\lambda)$. Proof of the theorem is now completed. \hfill{$\Box$}

\subsection{Proof of Theorem \ref{thmg}}
We use the equation \eqref{eqqga2a3} to compute
$$
|a_3|-|a_2|=\bigg|\cfrac{\lambda^2 c_1^2-2\lambda c_2}{24}\bigg|-\bigg|\cfrac{\lambda c_1}{4}\bigg|=\cfrac{\lambda}{24}\big(|\lambda c_1^2-2 c_2|-6|c_1|\big).
$$
We can check that the functional $|a_3|-|a_2|$ are rotationally invariant, so we assume $c_1=c\in [0,1]$. Also, we note that
$$
|a_3|-|a_2|=\cfrac{\lambda}{24}\big(| c^2(1-\lambda)+x(4-c^2)|-6c\big)
$$
follows from Lemma \ref{Pclass} for some $x\in \overline{\mathbb{D}}$ and therefore by substituting $x=re^{i\theta}$ we deduce that
\begin{align}\label{eqga3a2}
|a_3|-|a_2|&=\cfrac{\lambda}{24}\big(\sqrt{(1-\lambda)^2c^4+r^2(4-c^2)^2+2rc^2(1-\lambda)(4-c^2)\cos \theta}-6c\big)\\
&\le\cfrac{\lambda}{24}\big(((1-\lambda)c^2+r(4-c^2))-6c\big)\nonumber\\
&\le\cfrac{\lambda}{24}\big((1-\lambda)c^2+(4-c^2)-6c\big)= \cfrac{\lambda}{24}(4-\lambda c^2-6c)\le\cfrac{\lambda}{6}\nonumber.
\end{align}
The desired inequality thus follows. As we noted in the previous theorem, the function $H_{\lambda}$ given by \eqref{Heq} belongs to the class $\mathcal{G}(\lambda)$. Therefore, from \eqref{eqHa3a2} we conclude now that the equality occurs for $H_{\lambda}$.

We now proceed to prove the left-hand side inequality. By using the equality \eqref{eqga3a2} and the inequality $\cos \theta\ge -1$ we have that
\begin{align}\label{eqmin}
|a_3|-|a_2|&\ge \cfrac{\lambda}{24}\big(|(1-\lambda)c^2-r(4-c^2)|-6c\big)\nonumber\\
&\ge \cfrac{\lambda}{24}\big(|(1-\lambda)c^2-(4-c^2)|-6c\big)=\cfrac{\lambda}{24}\big(|c^2(2-\lambda)-4|-6c\big).
\end{align}
The last equality holds when $r=1$.
Next we consider the following two cases in order to complete the proof:

\medskip\noindent
{\bf Case 1:} Let $c^2(2-\lambda)-4\le 0$. Then
$$
|a_3|-|a_2|\ge\cfrac{\lambda}{24}\big(-c^2(2-\lambda)+4-6c\big).
$$
It is easy to verify that $-c^2(2-\lambda)+4-6c$ is a decreasing function of $c$ in the interval $[0,2/\sqrt{2-\lambda}]$.

\medskip\noindent
{\bf Case 2:} Let $c^2(2-\lambda)-4\ge 0$. Then
$$
|a_3|-|a_2|\ge\cfrac{\lambda}{24}\big(c^2(2-\lambda)-4-6c\big).
$$
It is easy to verify that $\phi(c):=c^2(2-\lambda)-4-6c$ is a decreasing function of $c$ in the interval $[2/\sqrt{2-\lambda}, 3/(2-\lambda)]$. Thus,

\medskip\noindent
{\bf (a)} If $0<\lambda\le1/2$, then $\phi(c)$ is increasing in the interval $[3/(2-\lambda), 2]$.\\
{\bf (b)} If $1/2\le\lambda\le1$, then $\phi(c)$ is decreasing in the interval $[2/\sqrt{2-\lambda}, 2]$.\\

\medskip\noindent
Therefore, for $0<\lambda\le1/2$ the minimum in \eqref{eqmin} is attained at $c=3/(2-\lambda)$, which implies that
$$
|a_3|-|a_2|\ge\cfrac{\lambda(4\lambda-17)}{24(2-\lambda)},
$$
and for $1/2\le\lambda\le1$, \eqref{eqmin} takes its minimum value at $c=2$ which implies that
$$
|a_3|-|a_2|\ge-\cfrac{\lambda(\lambda+2)}{6}.
$$

Note here that equalities hold simultaneously above for a suitable choice of $x$ and $c$. Choose $x=-1$ and 
$$
c=\left \{
\begin{array}{ll}
\cfrac{3}{2-\lambda}, & {\mbox{ for }} 0<\lambda\le 1/2,
\\[5mm]
2, & {\mbox{ for }} 1/2\le\lambda\le1.
\end{array}
\right.
$$
Then by the Carath\'{e}odory-Toeplitz theorem, it follows that $p$ has the form \eqref{p(z)form} satisfying \eqref{eqe1e2}.
Now Lemma \ref{giff} together with $p$ gives the required form of the extremal function
$$
F_{\lambda}(z)=\int_{0}^{z}[(1-\epsilon_1t)^{\gamma_1}(1-\epsilon_2t)^{\gamma_2})]^\lambda.
$$
This completes the proof of Theorem \ref{ga2a3}.\hfill{$\Box$}

\begin{remark}
	In this paper, we found the sharp bounds of $\big | |a_{n+1}|-|a_n|\big |$ for the class $\mathcal{S}_{\gamma}(\alpha),\,\mathcal{C}_{\gamma}(\alpha)$, and $\mathcal{G}(\lambda)$ only for $n=1,2$.
For the remaining positive values of $n$
(i.e. for $n\ge 3$) this problem is still open.
Investigation for a complete solution to this problem may lead to new techniques in this development.
\end{remark}

\bigskip
\noindent
{\bf Declaration of competing interest.} The author declares that there is no conflict of interest regarding the publication of this paper.

\section*{Acknowledgments}
I would like to thank my Ph.D. supervisor Dr. Swadesh Kumar
Sahoo for his helpful remarks and suggestions.


\begin{thebibliography}{99}
\bibitem{APS19} {Arora V., Ponnusamy S., Sahoo  S. K.}:
{\em Successive coefficients for spirallike and related functions}, 
Rev. R. Acad. Cienc. Exactas F\'{i}s. Nat. Ser. A Mat., {\bf 113}(4), 2969--2979 (2019)

\bibitem{AS18} {Arora V., Sahoo  S. K.}: 
{\em Meromorphic functions with small Schwarzian derivative}, 
Stud. Univ. Babes-Balyai Math., {\bf 63}(3), 355--370  (2018)


\bibitem{Dur83} {Duren P. L.}:
{\em Univalent Functions},
Springer-Verlag, New York, 1983

\bibitem{Goo83}
 Goodman A. W.: {\em Univalent functions}, Vols. 1--2, Mariner Publishing Co., Tampa, FL, 1983

\bibitem{GS58}
Grenander U., Szeg\H{o} G.: {\em Toeplitz forms and their applications}, University of California Press, Beekeley and Los Angeles, 1958

\bibitem{Gri76} {Grinspan A. Z.}:
{\em Improved bounds for the difference of adjacent coefficients of univalent functions (Russian), Questions in the mordern theory of functions (Novosibirsk)},
{Sib. Inst. Mat.,} {\bf 38}, 41--45  (1976)

\bibitem{Hay63} {Hayman W. K.}:
{\em On successive coefficients of univalent functions},
{J. London. Math. Soc.,} {\bf 38}, 228--243  (1963)

\bibitem{KS20} 
{Kumar S., Sahoo S. K.}:
{\em  Preserving properties and pre-Schwarzian norms of nonlinear integral transforms}, 
Acta Math. Hungar., {\bf 162}, 84--97  (2020)

\bibitem{Shankey20} {Kumar S., Sahoo S. K.}:
{\em  Radius of convexity for integral operators involving Hornich operations}, J. Math. Anal. Appl., {\bf 502} (2), 125265 (2021)

\bibitem{Leu78} {Leung Y.}:
{\em Successive Coefficients of starlike functions},
Bull. London Math. Soc., {\bf 10}, 193--196 (1978)

\bibitem{L18}
Li M.:
{\em A note on successive coefficients of spiralike functions},
FILOMAT, {\bf 32} (4), 1199--1207 (2018)

\bibitem{LS17}
Li M., Sugawa T.:
{\em A note on successive coefficients of convex functions,}
Comput. Methods Funct. Theory, {\bf 17}(2), 179--193 (2017)


\bibitem{Lib67} {Libera R. J.}:
{\em Univalent $\alpha$-spiral functions,}
{ Canad. J. Math.}, {\bf 19}, 449--456 (1967)

\bibitem{Lib82} { Libera R. J., Z\l{}otkiewicz E. J.}:
{\em Early coefficients of the inverse of a regular convex function,}
{ Proc. Amer. Math. Soc.}, {\bf 85}, 225--230 (1982)

\bibitem{Lib83} { Libera R. J., Z\l{}otkiewicz E. J.}:
{\em Coefficient bounds for the inverse of a function with derivatives in $\mathcal{P}$,}
{ Proc. Amer. Math. Soc.}, {\bf 87}, 251--257  (1983)

\bibitem{Obradovic13}
{Obradovic M., Ponnusamy S., Wirths K.-J.}:
{\em Coefficient characterizations and sections for some univalent functions}, Sib. Math. J., \textbf{54}(1), 679--696 (2013)

\bibitem{Ozaki41} { Ozaki  S.}: 
{\em On the theory of multivalent functions. II},
Sci. Rep. Tokyo Bunrika Daigaku. Sect. A., \textbf{4}, 45--87 (1941)

\bibitem{PO19} { Peng Z., Obradovi\'{c} M.}:
{\em The estimate of the difference of initial successive coefficients of univalent functions,}
{ J. Math. Inequal.}, {\bf 13}, 301--314  (2019)

\bibitem{Pfaltzgraff} Pfaltzgraff J. A.:
{\em Univalence of the integral of $f'(z)^{\lambda}$},
{Bull. London Math. Soc.}, {\bf 7}, 254--256 (1975)

\bibitem{POM71}
 Pommerenke C.:
{\em Probleme aus der Funktionentheorie}:
Jber. Deutsh. Math.-Verein., {\bf 73}, 1--5 (1971)


\bibitem{Pom75} {Pommerenke C.}:
{\em Univalent function},
Vandenhoeck and Ruprecht, G$\ddot{o}$ttingen, 1975

\bibitem{ponnusamy95} { Ponnusamy S., Rajasekaran S.}:
{\em New sufficient conditions for starlike and univalent functions},
Soochow J. Math., \textbf{21}(2), 193--201 (1995)

\bibitem{ponnusamy96} {Ponnusamy S., Singh V.}:
{\em Univalence of certain integral transforms},
Glas. Mat. Ser. III, \textbf{31}(2) (51), 253--261 (1996)

\bibitem{ponnusamy07} { Ponnusamy S., Vasudevarao A.}:
{\em Region of variability of two subclasses of univalent functions},
J. Math. Anal. Appl., \textbf{332}(2), 1323--1334 (2007)


\bibitem{ST20} {Sim Y. J., Thomas D. K.}:
{\em On the difference of coefficients of starlike and convex functions,}
{Mathematics}, \textbf{8}(9) (2020),
https://doi.org/10.3390/math8091521.


\bibitem{SimT20} {Sim Y. J., Thomas D. K.}:
{\em On the difference of inverse coefficients of univalent functions,}
{ Symmetry}, \textbf{12}(12) (2020),
https://doi.org/10.3390/sym12122040.

\bibitem{ST21} {Sim Y. J., Thomas D. K.}:
{\em A note on spirallike functions,}
{Bull. Aust. Math. Soc.},  https://doi.org/10.1017/S0004972721000198.

\bibitem{SinghChic-77}  Singh V., Chichra P. N.: 
{\em Univalent functions $f(z)$ for which $zf'(z)$ is $\alpha$-spirallike},
{Indian J. Pure Appl. Math.}, {\bf 8}, 253--259 (1977)

\bibitem{Spacek-33} { ${\rm \check{S}pa\check{c}ek}$ L.}:
{\em Contribution $\rm \grave{a}$ la th$\rm \acute{e}$orie des fonctions univalentes}
(in Czech),
\textit{$\check{C}$asop P$\check{e}$st. Mat.-Fys.}, {\bf 62}, 12--19  (1933)

\bibitem{Tsuji} {Tsuji M.}:
{\em Potential theory in modern function theory,}
{ Maruzen, Tokyo}, (1959)

\end{thebibliography}
\end{document}